\documentclass[11pt,a4paper]{article}
\oddsidemargin -.5 true cm
\evensidemargin -.5 true cm
\marginparwidth 0.75 in
\textwidth 6.5 true in
\textheight 22 true cm
\topmargin -.5 true cm

\usepackage{amssymb,amsmath,amsthm}
\usepackage[english]{babel}
\usepackage{t1enc}
\usepackage[latin2]{inputenc}
\usepackage{graphicx}
\usepackage{footnpag}
\usepackage{url}

\makeatletter
\newtheorem*{rep@theorem}{\rep@title}
\newcommand{\newreptheorem}[2]{%
\newenvironment{rep#1}[1]{%
 \def\rep@title{#2 \ref{##1}}%
 \begin{rep@theorem}}%
 {\end{rep@theorem}}}
\makeatother

\newtheorem{thm}{Theorem}
\newreptheorem{thm}{Theorem}
\newtheorem{cor}[thm]{Corollary}
\newtheorem{defi}[thm]{Definition}
\newtheorem{lem}[thm]{Lemma}
\newtheorem{prop}[thm]{Proposition}
\newtheorem{claim}[thm]{Claim}

\newtheorem{obs}[thm]{Observation}

\usepackage{indentfirst}

\hfuzz2pt

\newcommand{\VR}{\textit{VR}}
\newcommand{\eps}{\varepsilon}
\DeclareMathOperator{\dist}{dist}

\def\N{\mathbb N}
\def\Z{\mathbb Z}

\newcommand{\A}{\ensuremath{\mathbf{A}}}
\newcommand{\B}{\ensuremath{\mathbf{B}}}

\begin{document}

\title{Advantage in the discrete Voronoi game}
\author{D\'aniel Gerbner\thanks{Alfr\'ed R\'enyi Institute of Mathematics, Hungarian Academy of Sciences, Budapest, Hungary; \protect\url {gerbner.daniel@renyi.mta.hu}. Research supported by Hungarian Science Foundation EuroGIGA Grant OTKA NN 102029}, Viola M\'esz\'aros\thanks
{Bolyai Institute, University of Szeged,
Aradi v\'ertan\'uk tere 1, 6720 Szeged, Hungary;
\protect\url
{viola@math.u-szeged.hu}.
Supported by OTKA Grant K76099, OTKA Grant 102029 and by the ESF EUROCORES programme EuroGIGA-ComPoSe,
  Deutsche Forschungsgemeinschaft (DFG), under grant FE 340/9-1.},  
D\"om\"ot\"or P\'alv\"olgyi\thanks{
Institute of Mathematics, E\"otv\"os University, Budapest, Hungary;
\protect\url
{dom@cs.elte.hu}.
Supported by Hungarian National Science Fund (OTKA), under grant PD 104386 and under grant NN 102029 (EUROGIGA project GraDR 10-EuroGIGA-OP-003) and the J\'anos Bolyai Research Scholarship of the Hungarian Academy of Sciences.}, 
Alexey Pokrovskiy\thanks
{Department of Mathematics, London School of Economics and Political Science, London, UK;
\protect\url
{a.pokrovskiy@lse.ac.uk}.
Supported by the LSE postgraduate Research Studentship Scheme},
G\"unter Rote\thanks
{Freie Universit\"at Berlin, Institut f\"ur Informatik,
Takustr.~9, 14195 Berlin, Germany;
\protect\url
{rote@inf.fu-berlin.de}.
Supported by the
 ESF EUROCORES programme EuroGIGA-VORONOI, Deutsche Forschungsgemeinschaft (DFG): grant RO 2338/5-1.}
}

\maketitle

\begin{abstract}
We study the discrete Voronoi game, where two players alternately
claim vertices of a graph for $t$ rounds.  In the end, the remaining
vertices are divided such that 
each player receives the vertices that
are closer to his or her claimed vertices.  We prove that there are
graphs for which the second player gets almost all vertices in this
game, but this is not possible for bounded-degree graphs.  For trees,
the first player can get at least one quarter of the vertices, and we
give examples where she can get only little more than one third of
them.
We make some general observations, relating the result with many rounds to
the result for the one-round game on the same graph.  
\end{abstract}

\section{Introduction}
The classic facility location problem deals with finding the optimal location for a facility (such as a supermarket, hospital, fire station) with respect to a given set of customers.  Typically, we want to place the facility to minimize the distance customers need to travel to get to it.
Competitive facility location is a variant of the problem when  several service providers compete for the interests
of the same set of customers.  An example would be two supermarket
chains building shops in a city -- with each chain trying to attract
the largest number of customers.  

We study a simple model of competitive facility location called the
\emph{Voronoi game}.  
This game 
is a game played on a measurable
metric space by two players. 
 The players alternate in
 placing a facility on a single point in the space
 The game lasts for a fixed number of rounds.  At the end of the
game, the space is divided between the two players: each player
receives the area which is closer to his or her facilities,
or in other words, the sum of the areas of the corresponding regions in
the Voronoi diagram.  The winner is
the player who controls the greater portion of the space.

The Voronoi game was first defined
 by Ahn, Cheng, Cheong, Golin, and
van Oostrum~\cite{ACCGO}, who studied it on lines and circles.
Subsequently a discrete version of the game emerged,
on which we shall focus in this paper; for results on the continuous game see e.g.\ \cite{CHLM, FM}.  The
\emph{discrete Voronoi game} is played on the vertices of a graph $G$
by two players called \A\ and \B\ for a fixed number $t$ of rounds.
Player \A\ starts, and they alternatingly claim vertices of $G$ during
each round $1, \dots, t$.  No vertex may be claimed more than once.
At the end of the game, the remaining vertices are divided between the
players -- with each player receiving the vertices that are closer to
his or her claimed vertices. If a vertex is equidistant to each players' claimed vertices then it is split evenly between \A\ and \B\ (each player receives half a vertex.) The winner is the player who controls more vertices at the end.

This natural variant was first studied by Demaine, Teramoto and
Uehara~\cite{TDU}, who showed that it is NP-complete to determine the
winner in the Voronoi game on a general graph $G$, even if the game
lasts for only one round, (but player \B\ can place more than one pebble).   They also studied the game on a large
$k$-ary tree and showed that under optimal play, the first player wins
if $k$ is odd, and that the game ends in a tie when $k$ is even.  The
game on trees was studied further by Kiyomi, Saitoh, and Uehara~\cite{KSU} who
completely solved the game on a path -- they showed that the game on a
path with $n$ vertices played for $t<n/2$ rounds always ends in a draw,
unless $n$ is odd and $t=1$, in which case \A\ wins (by having one vertex more).
There are many results that deal with various algorithmic questions
about variations and special cases of the Voronoi game, for example for
 weighted graphs \cite{bbd-os1rd-11},
in a planar geometric setting 
 \cite{ceh-fgsms-07,bn-nvmcf-13}, or
for a ``continuous'' graph model~\cite{bbds-vgg-13}.
%


The above results suggest that in general it is hard to
determine the winner of the Voronoi game on a graph.  Therefore, in
this paper, we will not be concerned with deciding the winner of the
game -- rather we are interested in knowing how large proportion of the vertices a player can control at the end of the game. One particular question we are interested in is the following: ``for $\epsilon>0$ and a graph with $n$ vertices, does $A$ (or $B$) have a strategy to control at least $\epsilon n$ vertices at the end of the game?'' This
question motivates the following definition.

\begin{defi}
For a given graph $G$ define its {\em Voronoi ratio}, $\VR(G,t)$, as the number of vertices that belong to \A, plus half of the number of tied vertices (if there are any) divided by the total number of vertices in $G$ after an optimal play of $t$ rounds.
\end{defi}

It is not immediately clear what range $\VR(G,t)$ can take.  By considering a star $S_k$ with $k$ leaves, it is easy to show that
\begin{equation}\label{star}
\VR(S_k,t)=1-\frac{t}{k+1}.
\end{equation}
This shows that $\VR(G,t)$ can be arbitrarily close to $1$, and hence
\A\ can control almost all the vertices by the end of the game.  By
considering a path it is possible to show that $\VR(G,t)=1/2$ is
possible as well~\cite{KSU}.  However, constructing a graph which
satisfies $\VR(G,1)<1/2$ is already non-trivial.  The smallest such
graph that we know of has $9$ vertices.
It consists of a cycle of length six, with an additional leaf attached to every other vertex of the cycle.  It is easy to check that at the end of the 1-round Voronoi game on this graph\ \B\ can always control 5 of the 9 vertices.  In Section~\ref{NoLowerBoundSection} we show that, in fact, $\VR(G,t)$ can be arbitrarily close to zero.

\begin{thm}\label{NoLowerBound}
For every $\epsilon>0$ and $t \in \N$, there is a graph $G$ with $\VR(G,t)<\epsilon$.
\end{thm}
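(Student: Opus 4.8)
The plan is to exploit the fact that \A\ has only $t$ tokens at her disposal, so that if the graph is assembled from many identical ``cluster'' gadgets, each carrying a large amount of mass, then \A\ can personally reach only a bounded number of them and \B\ should be able to scoop up all the rest. Concretely, I would fix a single gadget $C$ --- a vertex $z$ carrying $m$ pendant ``prize'' vertices, joined to the rest of the graph through a short bottleneck --- designed so that the only way to become strictly closest to the prizes of $C$ is to claim a vertex inside a small core of $C$. I would then take $N \gg t$ disjoint copies $C_1,\dots,C_N$ and wire their cores into a common frame $F$, so that the whole graph has $n \approx Nm$ vertices and the prizes dominate the count. If \B\ can guarantee that every cluster whose core \A\ does not personally occupy ends up belonging to \B, then \A\ controls the prizes of at most $t$ clusters together with $O(|F|+t)$ further vertices, giving $\VR(G,t)=O\big(\tfrac{tm+|F|}{Nm}\big)$, which is below $\eps$ once $N$ is large. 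Note that this counting is already a genuine $t$-round argument, since it rests only on \A\ having $t$ tokens in total; the single-round observations announced in the introduction can serve as a cross-check but are not strictly needed.

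The single-gadget analysis should be the easy part. One designs $C$ so that if \A's nearest claimed vertex lies at distance $\ge 2$ from $z$, then \B\ can claim $z$ (or the bottleneck) and win all $m$ prizes of that cluster, whereas if \A\ occupies the core of $C$ directly she secures only that one cluster's prizes. This ``dedicated move'' phenomenon is reminiscent of the behaviour in \eqref{star}, where each additional token claims only a single leaf.

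The heart of the argument is a \emph{response strategy} for \B: after each move of \A, \B\ answers with a vertex that secures one threatened cluster for good while never surrendering a cluster already leaning towards \B. The main obstacle is the interaction \emph{between} clusters through the frame $F$, and it is here that the naive constructions break down. If $F$ has small diameter --- for instance if all cores hang off a common hub --- then \A\ can claim the hub and sit at distance $2$ from every core simultaneously, so a single central token beats \B\ at almost every far cluster and \A\ wins. This forces $F$ to have large girth and no low-diameter dominating vertex, so that any token of \A\ is genuinely close to only $O(1)$ clusters. But the same spreading makes it hard for \B's $t$ tokens to be close to the remaining $N-2t$ clusters, and one must rule out that these ``far'' clusters split evenly, which would only yield $\VR \to 1/2$.

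Overcoming this last point is exactly the amplification of the frustration already visible in the $9$-vertex example $C_6$ plus three leaves: the frame must be built so that \emph{from every position \A\ can occupy, \B\ has a strictly dominating reply}, and the prizes should be attached asymmetrically (an ``\A-side'' and a ``\B-side'' approach to each $z$) so that undecided clusters default to \B\ rather than being tied. I expect the balancing of the frame --- simultaneously denying \A\ a central move and guaranteeing \B\ the default ownership of the far clusters --- to be the step that requires the real work; once it is in place, choosing $N$ as a function of $t$ and $\eps$ finishes the proof.
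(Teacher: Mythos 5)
Your proposal correctly isolates what the theorem really requires --- a graph in which, after every move of \A, player \B\ has a reply that is strictly closer to almost all of the ``heavy'' mass --- but it stops exactly at that point: the frame $F$ with this property is never constructed, and you yourself flag its construction as ``the step that requires the real work.'' That step \emph{is} the theorem; everything else (attaching $N\gg 1$ pendant leaves to concentrate mass, counting $O(t)$ conceded locations, choosing parameters) is routine. Moreover, the two concrete devices you propose for closing the gap do not work. First, ``undecided clusters default to \B'' via an ``asymmetric attachment'' is not a coherent notion in this game: ownership of a vertex is determined solely by distances to the claimed vertices, and the graph is fixed before play, so no wiring can encode a preference for the second player independently of where the tokens sit. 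A far cluster equidistant from both players' tokens is tied, full stop. Second, your structural deduction that $F$ must be ``spread out'' so that each token is close to only $O(1)$ clusters leads into the dead end you then acknowledge: if \A's tokens are each near $O(1)$ clusters, so are \B's, and the $N-O(t)$ far clusters are decided by frame geometry that neither player controls, which at best yields $\VR\to 1/2$.

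The paper escapes this dilemma by abandoning locality altogether. Its heavy locations are not $N\gg t$ clusters but only $d\approx t/\eps$ ``corners,'' namely the corners of a discretized regular simplex: the vertex set is $\{x\in\Z^d : x_i\ge 0,\ \sum_i x_i = d^2t\}$ with edges between points at $L_1$-distance $2$, and $N$ leaves attached at each corner. Every token influences \emph{all} corners (the opposite of your large-girth requirement), but the simplex geometry supplies exactly the universal dominating reply you wanted: if \A\ plays $x$ with largest coordinate $x_i$, then \B\ plays $\pi_i(x)$, obtained by subtracting $d-1$ from $x_i$ and adding $1$ to every other coordinate, which is \emph{strictly} closer than $x$ to every corner except the $i$-th (iterating $\pi_i$ up to $t$ times handles the case that the reply is already occupied). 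Hence each round concedes at most one corner to \A, she ends with at most $t$ of the $d$ corners' leaf-masses, and $\VR(G,t)\lesssim t/d<\eps$. So the honest comparison is: your plan reproduces the outer counting shell of the paper's argument, but the core mechanism --- a geometry in which domination on all but one heavy location is always available --- is missing from your proposal, and the design constraints you impose on $F$ point away from, not toward, the construction that actually works.
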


This theorem, together with (\ref{star}) shows that in general the discrete Voronoi game does not favor either player.\footnote{In fact the construction easily generalizes to the continuous Voronoi game as well, but here we focus only on the discrete version.}  However there may be natural classes of graphs on which one of the players has a significant advantage.  

In Section~\ref{TreesSection}, we study the Voronoi game on a tree and
show that every tree $T$ satisfies $\VR(T,t)\geq 1/4$ for all $t$.
When the number of rounds is small, the first player may obtain an
even larger advantage.  It was noted in~\cite{TDU} that $\VR(T,1)\geq
1/2$ for every tree $T$.  We show that $\VR(T,2)\geq 1/3$, for any
tree $T$ and construct trees whose Voronoi ratio is arbitrarily close
to $1/3$ for $t\ge 2$ moves.

In Section~\ref{BoundedDegreeSection}, we study the Voronoi game on a
graph with bounded maximum degree.  We show that every graph $G$ with
maximum degree $\Delta$ has $\VR(G,t)\leq 1-1/2\Delta$.  We show that the bound
in this result cannot be decreased 
substantially 
 by constructing graphs $G$  with maximum degree $\Delta$ whose Voronoi ratio is arbitrarily close to $1-1/\Delta$.

In order to prove some of the above results, we first establish bounds on the Voronoi ratio which hold for \emph{all} graphs.  In Section~\ref{GeneralBoundsSection} show that for any $t$, $\VR(G,t)$ can be bounded in terms of the quantity $\VR(G,1)$:

\begin{thm}\label{GeneralBound}
For every graph $G$ and $t\geq 1$ we have 
$$\frac{1}{2}\VR(G,1)\leq \VR(G,t)\leq \frac{1}{2}(\VR(G,1)+1).$$
\end{thm}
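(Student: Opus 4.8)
The plan is to derive both inequalities from a single symmetric lemma: in the $t$-round game each player can guarantee at least half of what that same player can guarantee in the one-round game. Writing $n=|V(G)|$, I would prove that \A\ has a strategy securing at least $\tfrac12\VR(G,1)\cdot n$ vertices and, symmetrically, that \B\ has a strategy securing at least $\tfrac12\bigl(1-\VR(G,1)\bigr)\cdot n$ vertices. The lower bound is then immediate. The upper bound follows by complementarity: if \B\ holds at least $\tfrac12(1-\VR(G,1))n$ vertices, then \A\ holds at most $n-\tfrac12(1-\VR(G,1))n=\tfrac12(1+\VR(G,1))n$, i.e.\ $\VR(G,t)\le\tfrac12(\VR(G,1)+1)$. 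The one observation this reduction needs is that \B\ can guarantee $(1-\VR(G,1))n$ against \emph{any} single first move of \A, not merely the optimal one. This holds because $\VR(G,1)$ is a max over \A's first move of a min over \B's response; hence for every vertex $a$ there is a reply $w$ leaving \A\ with share at most $\VR(G,1)n$, and correspondingly leaving \B\ with at least $(1-\VR(G,1))n$.

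For the strategy itself I would let the guaranteeing player open with the optimal one-round vertex $v^\ast$, whose Voronoi region against a single opponent vertex has size at least $\VR(G,1)n$ (respectively at least $(1-\VR(G,1))n$ for \B, using the observation above). For the remaining rounds that player plays reactively: each time the opponent claims a vertex, she answers it on her next turn with a ``neutralizing'' move chosen to recapture the vertices the opponent just threatened to take from the anchored region. For \A\ the moves pair up as $b_i\mapsto a_{i+1}$, so she answers all of \B's vertices except the last, $b_t$, which then plays exactly the role of the single opponent vertex from the one-round game; against $b_t$ alone the anchor $v^\ast$ still holds a region of size at least $\VR(G,1)n$. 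For \B\ the pairing is even cleaner, since \B\ moves second in every round and so answers \emph{every} move of \A.

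The heart of the matter, and the step I expect to be the main obstacle, is making ``neutralize'' precise on an \emph{arbitrary} graph and proving that it costs at most a factor of two. On a line or in a symmetric space one would simply reflect the opponent's point, but a general graph admits no such reflection, and one answering vertex cannot in general recapture \emph{everything} a new opponent vertex steals — some contested vertices can at best be forced into a tie. Note that this gap is genuine and is exactly what separates the claimed bound $\tfrac12\VR(G,1)$ from the stronger $\VR(G,1)$ one would get from perfect neutralization. I would therefore formalize the reactive move through a charging argument on the \emph{final} Voronoi partition: each vertex lost from the anchored region is charged either to the single uncountered opponent vertex (bounded by the one-round guarantee) or to one of the answered opponent vertices, and I would show that an answered vertex is charged only for the vertices it outright wins, while the contested vertices it shares with the answering move split evenly — this even split is precisely where the factor $\tfrac12$ enters. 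Care will be needed with the tie-counting convention (half a vertex to each player), with the off-by-one in \A's pairing (the free $b_t$), and with checking that the reactive move is always legal; the last point I would handle by letting the neutralizing move default to an arbitrary unclaimed vertex when its intended target is already taken, which can only help the answering player.
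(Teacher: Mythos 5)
You have the right reduction skeleton: prove that \A\ can secure $\tfrac12\VR(G,1)\,n$ and that \B\ can secure $\tfrac12(1-\VR(G,1))\,n$, and get the upper bound by complementarity; your max--min observation that \B\ can guarantee $(1-\VR(G,1))n$ against an \emph{arbitrary} first move of \A\ is also correct, and the paper uses it too. The genuine gap is the engine you propose for both guarantees: anchor at the one-round optimum (resp.\ the one-round best response) and then ``neutralize'' each opponent move, with a charging scheme in which each answered move splits contested vertices evenly. This cannot be repaired, because your charging rule lets an answered vertex keep everything it wins \emph{outright}, and on a general graph an opponent's move can win essentially everything it steals outright: if the opponent claims the center of a large star inside the anchored region, every leaf of that star is strictly closer to that center than to any possible reply, so a single answering move recaptures at most one vertex. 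The factor $\tfrac12$ never materializes from pairing; what saves the theorem is foresight, not local reaction.

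Both halves of your plan fail on explicit graphs. For \A's side, take a spider with center $v^*$ and four branches, the $i$-th being a path $v^*,x_i,y_i$ with $m$ pendant leaves at $y_i$; here $\VR(G,1)n=3m+7$ out of $n=4m+9$ (play $v^*$). In the $t=3$ game, if \A\ anchors at $v^*$ and reacts, \B\ plays $y_1,y_2,y_3$; each reaction recaptures at most one leaf, and \A\ finishes with about $m+6$ vertices, below the required $\tfrac12\VR(G,1)n=(3m+7)/2$ once $m>6$. (Correct play is proactive: after losing $y_1$, \A\ must fortify an untouched $y_j$ rather than chase the lost branch.) For \B's side, attach $m+5$ leaves to $v^*$ together with a path $p_1,p_2,p_3$ where $p_3$ carries $m$ leaves; the unique best one-round reply to $v^*$ is $p_1$ (it scores $m+3$), but anchoring there is fatal: \A\ then takes $p_3$, \B's reaction recaptures at most one vertex, and \B\ finishes with about $3$ vertices instead of the promised $(m+3)/2$; \B\ must instead open at $p_3$, which is \emph{not} the one-round best response. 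This is exactly why the paper proves both inequalities by strategy stealing rather than by an explicit reactive strategy: for the lower bound, \A\ plays the one-round optimum $v$ and then steals \B's hypothetical $t$-round strategy, paying only for the one final move she cannot make, whose damage is at most $(1-\VR(G,1))n$ by optimality of $v$; for the upper bound, \B\ considers an auxiliary $(t-1)$-round game on $G-v_A$ scored only on the region $H\cup K$ determined by the best reply $v_B$, and either opens at $v_B$ and follows the pre-placed-pebble player's strategy, or steals the first player's strategy in that game, according to who wins it. That case distinction is precisely the mechanism your framework lacks: it decides when greedy anchoring is right and when proactive play is required.
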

Thus, 
to a limited extent, the
outcome of the Voronoi game is determined just by the outcome of the
one-round game.
In particular, if the Voronoi ratio for one round it
close to 1, then it cannot be close to 0 for more rounds, and vice versa.
This theorem is 
useful for finding good bounds on the
Voronoi ratio of various classes of graphs beyond those considered in
this paper.


\section{General bounds on $\VR(G,t)$}\label{GeneralBoundsSection}
In this section we give  bounds for $\VR(G,t)$ for a  graph $G$ in terms of $\VR(G,1)$.
We prove Theorem~\ref{GeneralBound}.

\begin{proof}
Both inequalities 
are proved by strategy stealing arguments. Let $n=|V(G)|$.

First we prove the left-hand inequality, $\VR(G,1)/2\leq \VR(G,t)$.
Suppose that $\B$ has a strategy in the $t$-round game that gives him more than
$1-\VR(G,1)/2$ of vertices in $G$.

  Let $v$ be the optimal vertex to pick for \A\ in the one-round game.
  Player \A's strategy for the $t$-round game is as follows.  First
  she picks $v$.  Then she pretends that she has not picked it and
  follows $\B$'s strategy, which would give her a fraction $1- \VR(G,1)/2$,
  except that she cannot play the last move.  The vertices that
  \A\ could have controlled by playing the last move $u$, but doesn't
  control having played $v$ are contained in $S=\{x\in G: \mathrm{dist}(x,u)\leq
  \mathrm{dist}(x,v)\}$.  By the definition of $\VR(G,1)$, we have $|S|\leq( 1-
  \VR(G,1))n$.  So at the end of the game $\A$ controls at least 
 $(1-\VR(G,1)/2)n-|S|\ge \VR(G,2)n/2$ vertices, proving the lower bound. 

Now we prove the right-hand inequality, 
$\VR(G,t)\leq \frac{1}{2}(\VR(G,1)+1)$.
Suppose $A$ plays $v_A$ in her first move, and
let $v_B$ be the best response of \B\ if he were playing the one-round
game. 
Let $H=\{\,h\in G: \mathrm{dist}(h,v_B)< \mathrm{dist}(h,v_A)\,\}$  and $K=\{\,k\in G: \mathrm{dist}(k,v_B)= \mathrm{dist}(k,v_A)\,\}$.  By definition of
$\VR(G,1)$, we have that $|H|+|K|/2\geq (1-\VR(G,1))n$.


For the remainder of the game \B\ is only interested in controlling as much of $H\cup K$ as possible.
In order to do this, we consider an auxiliary game called \emph{the new game} played on the graph $G-v_A$.  The following are the rules of {the new game}.
\begin{itemize}
\item Two players, named X and Y, alternate.  Player X goes first.
\item The game lasts for $t-1$ rounds.
\item Before the start of play, the vertex $v_B$ is occupied by player Y.
\item At the end of the game, the players score a point for each
  vertex of $H$ that they control and half a point for each vertex of
  $K$ that they control. 
Accordingly, tied vertices in $H$ give half a point to each player and tied vertices in $K$ give a quarter point to each player.  The winner is the player with the most points.
\end{itemize}
The winner of the new game scores at least $|H|/2+|K|/4$ points.  We
will show that \B\ can always end up controlling at least $|H|/2+|K|/4$
vertices at the end of the original game.  This proves the upper bound
of the theorem since $|H|/2+|K|/4\geq (1-\VR(G,1))|G|/2$.

  Player \B's strategy in the original game depends on which player wins under optimal play in {the new game}.

\textbf{Case 1:} Suppose that player Y wins {the new game}.
In this case, in the original game, player \B\ occupies
$v_B$ on his first move, and then follows player Y's strategy for {the
  new game}.  At the end of the game, the situation is as in the new
game except that $\A$ has an extra pebble on $v_A$.
 The inequality $\mathrm{dist}(v_B,h)<\mathrm{dist}(v_A,h)$ for all $h\in H$ ensures that
this extra pebble makes no difference for the outcome in $H$:
 player \B\ controls everything in $H$ which was controlled
 by player Y
 at the end of the new game, and ties are preserved in the same way.  Since  $\mathrm{dist}(v_B,k)=\mathrm{dist}(v_A,k)$ for all $k\in
 K$, \B\ gets at least half a vertex for every vertex in $K$ which was
 controlled (scoring $\frac12$)
or tied (scoring $\frac14$)
 by Y at the end of the new game.  Therefore, \B's score of vertices
 within $H\cup K$ 
is 
at least the number of points obtained by Y at the end of the new
game.  Since player Y won the new game, player \B\ must control at
least $|H|/2+|K|/4$ vertices in the original game.

\textbf{Case 2:} Suppose that player X wins {the new game}, or the new game ends in a draw.
In this case, player \B\ plays player X's strategy for {the new game}.
  If player \A\  ever occupies $v_B$ (such a move was not possible for
  Y in
  the new game), then \B\ wastes his following move by playing arbitrarily.
If \B\ ever needs to play on a vertex that he already occupies (from
a previous wasted move), then
he plays arbitrarily again, as in the usual strategy stealing
argument. \B\ also wastes his last move, which was not part of the new game.
At the end of the game, the difference from the situation in the new
game is that (i)~$\A$ has a pebble on $v_A$, (ii)~\A\ has possibly \emph{no
  pebble} on $v_B$, whereas $Y$ had a pebble there, and (iii)~$\B$ has some extra pebbles (in fact, one or two)
from wasted moves. The changes (ii) and (iii) are obviously in $\B$'s favor, hence
it suffices to discuss the effect of~(i). We can also assume that \A\ has
a  pebble on $v_B$, like player~$Y$.

Since $\mathrm{dist}(v_A,h)\geq \mathrm{dist}(v_B,h)$ for all $h\in H\cup K$, 
the additional pebble on $v_A$ has no effect on the outcome for the vertices from
$H\cup K$. Ties remain ties, and vertices under $\B$'s control remain so.
%
Player X had at least $|H|/2+|K|/4$ points at the end of the new game.
It follows that \B\ gets at least this many
vertices under the scoring rules
of the original game,
since the score can only
increase when going back to the original game: for vertices in $K$ is
doubled; for vertices in $H$ it is unchanged.
\end{proof}
\section{There is no lower bound on the Voronoi ratio}\label{NoLowerBoundSection}
The goal of this section is to prove Theorem~\ref{NoLowerBound}.  In fact we prove the following stronger version.
\begin{thm}\label{Zero}
%
  For every $t_0\geq 1$ and $\eps>0$, there is a graph  $G$
  for which player \B\ has a strategy for the Voronoi game ensuring him
  control over at least a fraction $1-\eps$ of the vertices after
  \emph{each} of the rounds $1, \dots, t_0$.
\end{thm}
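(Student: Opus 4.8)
The plan is to build $G$ by hand and to exhibit an explicit strategy for \B, rather than to invoke Theorem~\ref{GeneralBound}: the latter only gives $\VR(G,t)\le\frac12(\VR(G,1)+1)$, which even for $\VR(G,1)=0$ yields the useless bound $\frac12$. Since we need \B\ to dominate after \emph{every} round, I would look for a graph that is ``self-similar under responding'': after \A\ plays, \B\ answers with a move that beats that single new pebble almost everywhere, and the answer to \A's $k$-th move does not spoil the earlier answers. Writing $n=|V(G)|$ and $\delta=\eps/t_0$, if each such answer beats the corresponding \A-pebble on all but a $\delta$-fraction of the vertices, then after round $k$ the vertices controlled by \A\ lie in the union of $k\le t_0$ exceptional sets, so \B\ controls at least a $(1-\eps)$-fraction after each round. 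Thus the whole theorem reduces to a one-round statement: construct a graph in which, for \emph{every} vertex $v$ that \A\ might pick, \B\ has a response $w$ with
\[
\bigl|\{x: \dist(x,w)<\dist(x,v)\}\bigr|\ \ge\ (1-\delta)\,n .
\]

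The construction must reconcile two opposing requirements. First, $G$ must have \emph{no near-king}: if some vertex $z$ were closer than every other vertex to a $(1-\delta)$-fraction of $V(G)$, then \A\ would simply take $z$ and keep that fraction, so no response of \B\ could dominate. This rules out trees (whose centroid is a near-king, consistent with the bound $\VR(T,t)\ge1/4$ of Section~\ref{TreesSection}) and forces $G$ to contain cycles. Second, every vertex must nonetheless admit a dominating response. I would realize both by a graph with a cyclic ($\Z_m$) symmetry but \emph{no} reflections --- the absence of an automorphism swapping two vertices is exactly what frees \B\ from the trivial $\frac12$ split that vertex-transitivity would impose. Concretely I would take $m\gg t_0/\eps$ congruent ``sectors'' arranged in a ring, each carrying about $n/m$ vertices, communicating through a shared long-range ``core'' of diameter $\Theta(1/\delta)$, engineered so that rotating by one sector produces a vertex $w$ that is one step closer to the bulk than $v$ while a small cluster of vertices privately attached to $v$ sits far from $w$.

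Why the large diameter is unavoidable, and how it is used, is best seen by a potential argument that I would run first as a feasibility check and then reverse into the design. Let $v^*$ minimize $\Phi(v)=\sum_{x}\dist(x,v)$. If $w$ beats $v^*$ on $(1-\delta)n$ vertices it is closer there by at least $1$ each and farther on at most $\delta n$ vertices by at most $\operatorname{diam}(G)$ each, so
\[
\Phi(w)\le\Phi(v^*)-(1-\delta)n+\delta n\operatorname{diam}(G) ,
\]
and minimality of $\Phi(v^*)$ forces $\operatorname{diam}(G)\ge(1-\delta)/\delta$. Under the $\Z_m$-symmetry all sectors share the same potential, so $\Phi(w)=\Phi(v)$ for the chosen response; the displayed identity then \emph{forces} the exceptional cluster of each $v$ to lie at distance $\Theta(\operatorname{diam}(G))=\Theta(1/\delta)$ from its response $w$. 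This is exactly what lets a chain of ever-better centres close up into a ring instead of collapsing onto a single king: each step wins by a single unit on the overwhelming majority of vertices and pays for it with a few large, far-away losses, and symmetry balances these around the ring.

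The main obstacle is producing the core explicitly so that the inequality $\dist(w,x)<\dist(v,x)$ really holds for all but a $\delta$-fraction of $x$, \emph{simultaneously for every sector} and with the private clusters genuinely at distance $\Theta(1/\delta)$ from the chosen response. I would verify this by assigning each vertex explicit integer coordinates (a position on the ring together with a depth into the core), checking the inequality one sector at a time, and pushing every equidistant case --- which only splits evenly and so does not help \B\ --- into the small exceptional set, so that \B's wins are strict. Finally, for the multi-round game I would argue that distinct \A-pebbles may be treated as landing in distinct sectors (if \A\ collides with or plays into the core near an earlier answer, \B\ wastes a move, exactly as in the strategy-stealing proof of Theorem~\ref{GeneralBound}), so that the $k$ exceptional clusters are essentially disjoint and their union stays below $\eps n$. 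This yields the required $(1-\eps)$ guarantee after each of the rounds $1,\dots,t_0$.
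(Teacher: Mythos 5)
Your high-level skeleton is in fact the same as the paper's: pair each of \A's pebbles with a \B-response that beats it everywhere except on a small ``exceptional'' set, and take a union bound over the $\le t_0$ pairs (this is also why the per-round guarantee comes for free). But the proposal has two genuine gaps. First, the heart of the theorem --- an explicit graph in which \emph{every} vertex admits a response dominating it on a $(1-\delta)$-fraction --- is never constructed; you yourself flag it as ``the main obstacle'' and defer it to a verification ``one sector at a time'' that is not carried out. The ring-of-sectors architecture you sketch is also questionable on its own terms: a one-dimensional cyclic arrangement of mass lets a response get closer to, roughly, the mass ``on one side'' of \A's pebble, and you give no mechanism by which a single response gets strictly closer to nearly \emph{all} of the mass simultaneously. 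The paper resolves exactly this by going high-dimensional: the vertex set is a discrete $(d-1)$-simplex $\{x\in\Z^d : x_i\ge 0,\ \sum_i x_i = d^2t_0\}$ with $N$ leaves attached to each of the $d$ corners, and the response to $x$ is the ``projection'' $\pi_i(x)$ (subtract $d-1$ from the largest coordinate $x_i$, add $1$ to all others), which is closer than $x$ to \emph{all corners except the $i$-th}, so the exceptional set is a single corner's broom, of mass $\approx n/d$.

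Second, your collision handling breaks the very union bound your reduction rests on. If \A\ plays on or near a vertex that \B\ wanted (or already used) as a response and \B\ ``wastes a move, exactly as in the strategy-stealing proof of Theorem~\ref{GeneralBound}'', then that \A-pebble has \emph{no} paired response beating it, and nothing confines its territory to a small cluster --- indeed \A\ could deliberately play just ``inward'' of an old \B-pebble and capture most of the graph. Wasted moves are harmless in Theorem~\ref{GeneralBound} because there \B\ merely simulates an existing strategy; here every \A-pebble must actually be answered. The paper needs a concrete device for this: it iterates the projection, trying $\pi_i(x), \pi_i(\pi_i(x)),\dots$, noting that each iterate still beats $x$ at all corners but the $i$-th, that at most $t_0-1$ of them can be occupied, and that the largest coordinate $x_i\ge dt_0$ is big enough for $t_0$ iterations to stay inside the simplex. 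Your potential-function computation (forcing $\operatorname{diam}(G)\ge(1-\delta)/\delta$) is correct and a nice sanity check, but it is a necessary condition, not a substitute for the construction; as written, the proposal is a plausible research plan rather than a proof.
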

 This is slightly stronger than Theorem~\ref{NoLowerBound}, which requires for each \emph{fixed} number of
rounds $t \le t_0$,  that a winning strategy exists (possibly a
different strategy for each~$t$).  We will need this stronger
statement when we consider graphs of bounded degree in
Section~\ref{BoundedDegreeSection}. 

\begin{proof}
  We first illustrate the idea for the one-round game ($t=1$).  The
  construction is based on a continuous Voronoi game played on a
  $d$-dimensional regular simplex with the Euclidean metric and $\frac
  1{d+1}$ weight on each vertex.\footnote{We could get rid of the weights by starting a long, narrow path from each vertex of the simplex, giving a construction with uniform weight distribution, but not convex.} In this game, no matter where \A\
  places her pebble, \B\ can take a facet of the simplex 
  that does not contain this pebble and place his pebble on the
  projection of \A 's pebble to the facet. In this way, \A\ gets
  $\frac 1{d+1}$ and \B\ gets $\frac d{d+1}$.

  Consider the point set
$$\{\,(x_1,x_2,\ldots,x_d)\in \mathbb{Z}^d \mid x_i\ge 0,\
x_1+x_2+\dots+x_d = d^2\,\}$$
and connect two points by an edge if their Manhattan distance is 2.
This graph models a regular $(d-1)$-dimensional simplex in $d$
dimensions, and the distances in the graph are $\frac12$ times the $L_1$-distance on
$\mathbb{Z}^d$.
The corners $C$ are the points $(d^2,0,\dots,0)$,
 $(0,d^2,0,\dots,0)$, \dots,
 $(0,0,\dots,d^2)$.
Attach $N$ leaves to each corner.
The distance from $(x_1,\ldots,x_d)$ to the $i$-th corner is $d^2-x_i$.
Suppose $\A$ takes vertex $(x_1,\ldots,x_d)$. Suppose w.l.o.g.\ that $x_1$
is the
largest coordinate. Then $x_1\ge d$, and $\B$ can take
$(x_1-d+1,x_2+1,\ldots,x_d+1)$. This vertex
is closer to all corners except the first.  This ensures that \B\ controls at least $Nd$ vertices, which for sufficiently large $N$ is within $\epsilon$ of $\frac{|G|}{d+1}$.

We now prove the theorem for the general case of $t_0$ moves.  We start with the following set of points.
$$S := \{\,(x_1,x_2,\ldots,x_d)\in \mathbb{Z}^d \mid x_i\ge 0,\
x_1+x_2+\dots+x_d = d^2t_0\,\}$$
As before, we attach $N$ leaves to each vertex.
For a point $x=(x_1,\ldots,x_d)$,
let $\pi_i(x)$ denote the point obtained by subtracting $d-1$ from 
$x_i$ and adding $1$ to all remaining coordinates. This
operation corresponds to projecting $x$ to the simplex facet opposite
the $i$-th corner, except that $x$ is moved only by a fixed step size.
As long as all coordinates of
 $\pi_i(x)$
are nonnegative, moving from $x$ to
 $\pi_i(x)$ is brings us closer to all corners except the $i$-th
 corner.

 Now we try to play against $\A$ as in the case of a single move.  If
 $\A$ takes vertex $(x_1,\ldots,x_d)$, we find the largest coordinate
 $x_i$, and try to move to $\pi_i(x)$. However, this point may already
 be occupied by a previous pebble of~\A. Thus we try the points
 $\pi_i(x)$, $\pi_i(\pi_i(x))$, $\pi_i(\pi_i(\pi_i(x)))$, \dots\ in
 succession.  Since $\A$ has played at most $t_0-1$ previous pebbles,
 one of the first $t_0$ points of this sequence is free, and since
 $x_i\ge dt_0$, it is an element of $S$.

Thus, after each round, \A\ can own at most one additional corner.  If
$\A$ plays one of the $N$ leaves incident to a corner, we can treat
this case as if $\A$ had played the corresponding corner.  Thus, by
making $N$ large enough so that the vertices of $S$ become negligible,
$\A$ will never get more than a fraction $\frac {t_0}{d}+\eps'$ of the
vertices, where $\eps'>0$ can be made as small as we want.
The statement of the theorem follows by setting $d := 1+\lceil t_0/\eps\rceil$.
\end{proof}

\section{Trees} \label{TreesSection}

In this section we investigate the quantity $\VR(T,t)$ when $T$ is a
tree.
We provide tight lower bounds on $\VR( T,t)$ 
for $t=1$ and $t=2$ moves.
For one round, it is well-known that \A\ can always claim half the vertices
of any tree, see for example \cite[Section~6]{TDU}:
\begin{prop}\label{OneRoundTrees}
For all trees $T$, we have
$\VR( T,1)\ge \frac 12.$

This bound is tight, because for 
the path $P_n$ with $n$ vertices, we have
$\VR( P_n,1)\le \frac 12+\frac 1{2n}.$
\end{prop}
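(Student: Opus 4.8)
The plan is to prove the lower bound using the \emph{centroid} of the tree, and to prove tightness by giving an explicit response strategy for \B\ on the path.

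For the lower bound $\VR(T,1)\ge\frac12$, I would have \A\ play a centroid $c$ of $T$, that is, a vertex such that every connected component of $T-c$ contains at most $\lfloor n/2\rfloor$ vertices, where $n=|V(T)|$; such a vertex exists by Jordan's classical theorem on trees. The key observation is that once \A\ occupies $c$, any single pebble $u$ of \B\ can only win vertices lying in \emph{one} component of $T-c$. Indeed, in a tree the path from $u$ to $c$ is unique; write it as $c=p_0,p_1,\dots,p_k=u$, where $k=\dist(c,u)\ge 1$. For an arbitrary vertex $w$, let $p_j$ be the vertex of this path closest to $w$, so that $\dist(w,c)=\dist(w,p_j)+j$ and $\dist(w,u)=\dist(w,p_j)+(k-j)$. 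Hence $w$ is strictly closer to $u$ only if $j>k/2\ge\frac12$, and $w$ is tied only if $j=k/2$; in both cases $j\ge 1$, so the $c$--$w$ path starts along the edge $cp_1$. Thus every vertex that \B\ wins, and every vertex \B\ ties, lies in the component $C$ of $T-c$ containing $p_1$ and $u$. Since $|C|\le\lfloor n/2\rfloor$, player \B\ collects at most $n/2$ vertices even counting all ties in his favor, so \A\ is left with at least $n/2$, giving $\VR(T,1)\ge\frac12$.

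For tightness, I would exhibit a strategy for \B\ on $P_n$ with vertices labelled $1,2,\dots,n$. Suppose \A\ opens at vertex $a$. Then \B\ answers on a neighbour of $a$: playing $a+1$ leaves \A\ in control of exactly the vertices $1,\dots,a$, while playing $a-1$ leaves \A\ in control of exactly $a,\dots,n$; in either case the two pebbles are adjacent, so no vertex is tied. Player \B\ picks whichever neighbour minimises \A's haul, so \A\ ends with $\min(a,\,n-a+1)$ vertices. (The minimising neighbour always exists: when $a$ is an endpoint the only available move is already the better one for \B.) Since $\min(a,\,n-a+1)\le\frac12\bigl(a+(n-a+1)\bigr)=\frac{n+1}{2}$, player \A\ gets at most $\frac{n+1}{2}$ vertices regardless of her opening move, whence $\VR(P_n,1)\le\frac{n+1}{2n}=\frac12+\frac1{2n}$.

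The routine parts are the distance bookkeeping and the handling of tied vertices, both of which only help the respective claims; the one genuinely necessary ingredient is the existence of a centroid with the balanced-component property, which I would cite rather than reprove. I expect the main point to get right to be the argument that \B's winning (and tied) set is confined to a single component of $T-c$ --- everything else then follows from the centroid inequality $|C|\le\lfloor n/2\rfloor$.
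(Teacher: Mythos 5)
Your proof is correct and takes essentially the same approach as the paper: \A\ occupies a centroid (the paper's ``central vertex''), every component of $T-c$ has at most $n/2$ vertices, \B's single pebble can win or tie vertices only inside one such component, and tightness comes from the path. The only cosmetic difference is that the paper establishes the centroid's existence via an edge-weight/orientation argument (machinery it reuses later for the two-move case) and treats the one-component confinement as obvious, whereas you cite Jordan's theorem for the centroid and instead spell out the confinement argument in detail.
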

\begin{proof}
  The optimal strategy is to put a pebble on a central vertex.  Since
  our proof for two moves will extend the proof of this fact and of
  the existence of central vertices, we include this easy proof here.

An edge of the tree splits it into two parts of size $x\le n/2$ and
$n-x$. We
assign the smaller size $x$ as the \emph{weight} of this edge and
direct it from the smaller side to the larger side.
A tree may have a single undirected edge (of weight $n/2$), which is
called the \emph{central edge} $c_1c_2$.
It is easy to show that every vertex has at most one out-going arc.
There can only be one or two vertices without outgoing arcs (roots). If there
is a single root, it is called 
the \emph{central vertex} $c$ of the
tree; otherwise the two roots are the two vertices of the central
edge.
We can view the tree as a directed tree oriented towards a single root
$c$ or two adjacent roots $c_1,c_2$.

In any tree $T$,
the optimal strategy for \A\ is to play the central vertex or one of
the two vertices incident to the central edge. Removal of this vertex $v$
splits the graph into components of size at most $n/2$. In one
move, \B\ can get at most one component, and 
 thus, \A\ keeps at least half of the vertices.

A path on an even number of vertices, or more generally,
any tree which has a central edge, shows that the bound cannot be improved.
\end{proof}

Combining  Proposition~\ref{OneRoundTrees} with Theorem~\ref{GeneralBound} implies the following.
\begin{cor}
For every tree $T$ and every $t$,
$\VR( T,t)\ge \frac 14$. 
\end{cor}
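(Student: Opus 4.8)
The corollary follows by combining two results stated earlier in the excerpt, so the proof is essentially a one-line substitution. The plan is to invoke the lower bound from Theorem~\ref{GeneralBound}, namely $\VR(G,t)\ge \frac12\VR(G,1)$, and then apply it with $G=T$ a tree, using Proposition~\ref{OneRoundTrees} to control the one-round value.

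First I would recall that Theorem~\ref{GeneralBound} holds for \emph{every} graph $G$ and every $t\ge 1$, so in particular it applies when $G$ is a tree $T$. The relevant half of that theorem is the left-hand inequality $\frac12\VR(G,1)\le \VR(G,t)$. Next I would recall that Proposition~\ref{OneRoundTrees} gives $\VR(T,1)\ge \frac12$ for all trees $T$. Chaining these yields
\[
\VR(T,t)\ge \tfrac12\VR(T,1)\ge \tfrac12\cdot\tfrac12=\tfrac14,
\]
which is exactly the claimed bound.

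There is essentially no obstacle here: the content lies entirely in the two earlier results, and the corollary is just their composition. The only thing to be careful about is that the monotone substitution is valid, i.e.\ that multiplying the inequality $\VR(T,1)\ge \frac12$ by the positive constant $\frac12$ preserves its direction and may be combined transitively with $\VR(T,t)\ge \frac12\VR(T,1)$. Since both inequalities point the same way and the factor $\frac12$ is positive, the chain goes through immediately for all $t$, giving the uniform lower bound of $\frac14$.
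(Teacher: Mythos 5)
Your proof is correct and is exactly the paper's own argument: the corollary is stated there as an immediate consequence of combining Proposition~\ref{OneRoundTrees} ($\VR(T,1)\ge\frac12$) with the left-hand inequality of Theorem~\ref{GeneralBound}. Nothing further is needed.
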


For the case of two moves, we will improve this lower bound to
$\VR( T,2)\ge \frac 13$, which cannot be improved.
 We need the following lemma:
\begin{lem}\label{crucial}
  Let $T$ be a tree $T$ with $n$ vertices. Either, the central vertex
  $c$ has the following property:
\begin{enumerate}
\item [\rm $C_1$:]
  All components of the graph $T-\{c\}$ have at
most $n/3$ vertices,
\end{enumerate}
or there
are two distinct vertices $u,v$ with the following properties:
\begin{enumerate}
\item [\rm $C_2$:]
  All components of the graph $T-\{u,v\}$ have at
most $n/3$ vertices.
\item [\rm $C_2'$:]
 After removing the edges on the path from $u$ to $v$,
the component $T_u$ containing $u$ and
the component $T_v$ containing $v$ 
contain more than $n/3$ vertices each.
\end{enumerate}
\end{lem}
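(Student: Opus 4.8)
The plan is to locate the vertices using the centroid structure already developed in the proof of Proposition~\ref{OneRoundTrees}. Recall that the central vertex $c$ has the property that every component of $T-\{c\}$ has at most $n/2$ vertices. If in fact every such component has at most $n/3$ vertices we are in case $C_1$ and there is nothing to prove, so assume some component $A$ of $T-\{c\}$ has $|A|>n/3$. A counting argument shows this can happen for at most two components: three disjoint components each of size exceeding $n/3$ would already account for more than $n$ vertices. Thus either exactly one or exactly two components of $T-\{c\}$ are ``large'' (of size $>n/3$), and I will treat these two situations separately. (If $T$ has a central \emph{edge} $c_1c_2$ rather than a central vertex, both sides have exactly $n/2>n/3$ vertices and the argument below for two large components applies verbatim, with $c_1,c_2$ playing the role of the two attachment points.)

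The main tool is a descent along ``heavy'' subtrees inside a large component. Orient $A$ as a tree rooted at its attachment vertex $a$ (the neighbour of $c$ in $A$), and for a vertex $x$ let $s(x)$ denote the size of the subtree hanging below $x$; thus $s(a)=|A|$. Starting at $a$, I repeatedly move to a child whose subtree has more than $n/3$ vertices, stopping at the first vertex $u$ all of whose children carry subtrees of size at most $n/3$. Since $s(x)\le|A|\le n/2$ along the walk, a vertex can have at most one child with $s>n/3$ (two would force $s(x)>2n/3$), so the descent is well defined, and it terminates because $s$ strictly decreases. By construction $s(u)>n/3$ and every component of $A-\{u\}$ hanging below $u$ has size at most $n/3$. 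The crucial point is the size of the ``upper part'' $U=A\setminus(\text{subtree of }u)$: since $|A|\le n/2$ and $s(u)>n/3$, we get $|U|=|A|-s(u)<n/2-n/3=n/6$, comfortably below $n/3$.

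Now I assemble the two vertices. If there is a single large component $A$, I set $v:=c$ and take $u$ from the descent in $A$. Cutting the $u$--$v$ path leaves the subtree of $u$ as $T_u$ (so $|T_u|=s(u)>n/3$) and leaves $T_v=T\setminus A$ (so $|T_v|=n-|A|\ge n/2>n/3$), giving $C_2'$. Removing $\{u,c\}$ breaks $T$ into: the children subtrees of $u$ (each $\le n/3$), the upper part $U$ (of size $<n/6$), and the remaining components of $T-\{c\}$, which are the small ones by assumption; hence every piece is at most $n/3$ and $C_2$ holds. If there are two large components $A,A'$, I run the descent in each to obtain $u\in A$ and $v\in A'$. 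Again $|T_u|=s(u)>n/3$ and $|T_v|=s(v)>n/3$, so $C_2'$ holds, and removing $\{u,v\}$ splits $T$ into the children subtrees of $u$ and of $v$ (each $\le n/3$) together with the single middle component containing $c$, whose size is $n-s(u)-s(v)<n/3$ because $s(u),s(v)>n/3$; so $C_2$ holds.

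I expect the main obstacle to be the bookkeeping that guarantees every component stays below $n/3$ after the two deletions, and in particular recognising that the ``upper part'' $U$ cannot itself be large. This is exactly where the centroid bound $|A|\le n/2$ is used: it simultaneously forces $U$ to be tiny and bounds the number of large components by two, which is what makes the clean split into the single-component and two-component cases possible.
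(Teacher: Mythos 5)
Your proof is correct, and it is essentially the paper's own argument in different clothing: your descent along children whose subtrees exceed $n/3$ is exactly the paper's search for ``threshold vertices'' (vertices all of whose incoming edges have weight $\le n/3$ but whose outgoing edge, if any, has weight $>n/3$), and your case split on the number of large components of $T-\{c\}$ (zero, one, or two, plus the central-edge case) matches the paper's cases of the threshold vertex coinciding with $c$, a single threshold vertex $u\ne c$ with $v:=c$, and two threshold vertices. The size bookkeeping (outer pieces $\le n/3$ by the stopping rule, middle piece $<n/3$ since $|T_u|,|T_v|>n/3$) is likewise identical.
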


\begin{proof}
We use the orientation and weight labeling from the proof of
Proposition~\ref
{OneRoundTrees}.
We will try to find our vertices $u$ and $v$ as the vertices
 which have the
following \emph{threshold property}:
\begin{enumerate}
 \item[\rm (i)]
 All incoming edges have weight $\le n/3$.
 \item[\rm (ii)]
 No outgoing edge has weight $\le n/3$.
\end{enumerate}
 Part~(ii) of the condition means generally that
the outgoing edge has weight $>n/3$, but it
includes the case that
there is 
 no outgoing edge at all (the vertex is the
central vertex $c$ or it is incident to the central edge (of weight $n/2$)).
We call a vertex
  with properties {(i)} and~{(ii)} a
 \emph{threshold vertex}.

\begin{claim}\label{threshold-vertex}
  There is at least one {threshold vertex}, and there can be at most
  two threshold vertices.
\end{claim}
\begin{proof}
  To see that a threshold vertex exists, start from a root ($c$ or
  $c_1$ or $c_2$). If it has an incoming edge of weight $>n/3$ proceed
  along this edge, and repeat. Eventually, a threshold vertex must
  be reached.

Since weights are strictly increasing towards the root, no threshold
vertex can be an ancestor of another threshold vertex.
Thus, the subtrees of different threshold vertices must be disjoint.
On the other hand, the subtree rooted at a threshold vertex $u$ must
contain more than $n/3$ vertices:
if $u$ has an outgoing arc, this follows from property~(ii).
If $u$ is the central vertex $c$ or one of the endpoints $c_1,c_2$ of
the central edge, the subtrees have size $n$ and $n/2$ respectively.
It follows that there cannot be more than 2 threshold vertices.
\end{proof}

We note that a tree with a central vertex and two incoming arcs of
weight $>n/3$ must have two threshold vertices, by the argument in the
first part of the proof. We will need this fact later.

Now we can complete the proof of the lemma.
If there is a single threshold vertex $u$ which coincides with the
central vertex $c$, 
all components of
$G-c$ have size $\le n/3$, and we have established condition $C_1$.

Otherwise, there are either (a)~two threshold vertices
$u,v$, or
(b)~a
single threshold vertex $u\ne c$.

Case~(a): There are two threshold vertices $u\ne v$.
Since no threshold vertex is the ancestor of another threshold vertex,
the path from $u$ to $v$ uses the outgoing arc from $u$ (or if $u$
is incident to the central edge, it uses that central edge.) The
weight of this arc is the size of $T_u$, and by the definition of
threshold vertices, it is $>n/3$.  The same argument holds for $v$, and
thus we have established property $C_2'$.

Their are two types of components of $T-\{u,v\}$. There can be an ``inner
component'' that contains the path from $u$ to $v$ (unless $u$ and $v$
are adjacent). The remaining components are the \emph{outer
  components}: they
are connected by
edges that are directed into $u$ and $v$. Again, by the definition
of threshold vertices, their size is $\le n/3$.
The inner component contains everything except $T_u$ and $T_v$, and
hence its size is at most
$|T|-|T_u|-|T_v| < n-n/3-n/3 =n/3$,
thus
 giving property~$C_2$.

Case~(b):
There is a single threshold vertex $u\ne c$.
 In this case, we set
$v:=c$. The path from $u$ to $v=c$ is directed from $u$ to $v$.
As in case~(a), the first edge has weight $>n/3$, and thus
$|T_u|>n/3$.
The last edge is directed towards $v$;
therefore $|T_v| >n/2$, and property $C_2'$ is established.
As above, this implies the bound of $n/3$ on the size of the inner
component. 

The outer components that are incident to $u$ are treated as in
case~(a). Let us consider 
the outer components incident to $v=c$. If there were such a component
with $>n/3$ vertices, it would mean that another threshold vertex
could be found by following this edge down the tree, as we remarked after the proof
of Claim~\ref{threshold-vertex}. This is excluded in case~(b), and
thus we have established
property~$C_2$.
\end{proof}

\begin{thm}\label{TwoRoundTrees}
  \begin{enumerate}
  \item For every tree $T$, $\VR( T,2)> \frac 13$.
  \item For every $\eps>0$ and every $t\ge 2$, there is a tree $T$
 with 
 $\VR( T,t)< \frac 13+\eps$.
  \end{enumerate}
\end{thm}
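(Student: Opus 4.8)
The plan is to prove the two parts separately. For the lower bound (1) I would use Lemma~\ref{crucial} to obtain a good opening move for \A, and then argue that her reactive second move always leaves \B\ with strictly fewer than $2n/3$ vertices. For the upper bound (2) I would exhibit an explicit family of trees together with a strategy for \B\ confining \A\ to a $\tfrac13+\eps$ share.

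For part (1), suppose first that the central vertex $c$ has property~$C_1$, so every component of $T-\{c\}$ has at most $n/3$ vertices. \A\ plays $c$. A single pebble of \B\ placed in a component $X$ captures at most all of $X$ (achieved by playing the vertex of $X$ adjacent to $c$), so \B's two pebbles capture at most two whole components, i.e.\ at most $2n/3$. To make this strict I would use an \emph{entry-vertex} trick: after \B's first pebble lands in some component $X_i$, \A\ plays the neighbour of $c$ in the largest remaining component $X_j$. This forces \B, if he still wants $X_j$, to play strictly inside it and thereby cede at least the entry vertex to \A; and if instead \B\ attacks a third component, a counting argument (three components of size exactly $n/3$ would total $n>n-1$ vertices) again gives a strict gain. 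Either way \B\ ends with $<2n/3$, so $\VR(T,2)>\tfrac13$.

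Otherwise Lemma~\ref{crucial} gives $u,v$ with property~$C_2$ (all components of $T-\{u,v\}$ are $\le n/3$) and property~$C_2'$ ($|T_u|,|T_v|>n/3$). Here \A\ opens with $u$. The danger is the side $R_v$ of $T-\{u\}$ containing $v$, whose size lies strictly between $n/3$ and $2n/3$, so \A's second move must neutralise it. I would have \A\ react as follows: if \B\ attacks $R_v$, \A\ answers with $v$, keeping all of $T_u\cup T_v>2n/3$ minus the small piece \B\ grabbed; if \B\ grabs an outer component instead, \A\ applies the entry-vertex trick to the largest threatened component while $u$ continues to dominate the undefended large side. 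I expect the case~$C_2$ analysis to be the main obstacle of part (1): one must verify in every branch—\B\ attacking an outer component of $u$, an outer component of $v$, or the inner/$v$-region—that \A's two pebbles leave \B\ with strictly less than $2n/3$, carefully balancing the defence of the outer components against the defence of the large $v$-side.

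For part (2) the goal is a tree on which \B\ can secure $\tfrac23-\eps$, and the difficulty here is, I believe, the crux of the whole theorem. By Proposition~\ref{OneRoundTrees} every single-round analysis gives \A\ at least $\tfrac12$, and the obvious candidates—stars, spiders, and uniform hierarchical trees—all cap \A's share at exactly $\tfrac12$, because per-pebble capturing power is symmetric between the players and whoever occupies a ``central'' vertex controls the undecided mass, an advantage \A\ seizes by moving first. Hence no static, one-round-type bound can work, and the construction must exploit the interaction between rounds: \A's early pebbles, though locally worth about half, must commit her to positions from which \B's later responses (in the spirit of the ``projection'' strategy behind Theorem~\ref{Zero}) overrun roughly twice as much, while \A's remaining pebbles are pinned to defending what she already holds. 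I would therefore engineer a tree with many equal, far-apart heavy bunches whose connecting structure denies \A\ any single vertex dominating more than a third of the mass once \B\ replies, and in which \B\ has a mirroring/pairing strategy that claims two bunches for each one \A\ secures; scaling the number of bunches with $t$ then pushes \A's share down to $\tfrac13+\eps$. Designing this connecting structure so that it simultaneously blocks \A's central play and supports \B's mirroring, and proving the bound against \emph{every} \A-strategy, is where the real work lies.
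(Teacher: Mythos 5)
Your part (1) follows the paper's outline (open with Lemma~\ref{crucial}), but your strategy in the two-vertex case breaks down at exactly the branch that carries all the difficulty. You have \A\ open with $u$ and prescribe: ``if \B\ attacks $R_v$, \A\ answers with $v$.'' But \B's most natural attack on $R_v$ is to play $v$ itself, and then \A\ \emph{cannot} answer with $v$; your proposal never says what \A\ does in that case. The paper's proof is built around precisely this reply, and it needs two ingredients your sketch lacks: (i) a selection rule for the opening move --- among all components of $T-\{u,v\}$ take the largest, say it is $T_{u'}$ for a child $u'$ of $u$; then \A\ opens with the \emph{other} special vertex $v$; and (ii) a compensation inequality --- if \B\ grabs $u$, \A\ plays $u'$, after which she owns all of $T_{u'}$ plus $T_v$ minus at most one child component $T_{v'}$ of $v$, giving at least $|T_v|-|T_{v'}|+|T_{u'}|\ge |T_v|>n/3$ because $|T_{u'}|\ge |T_{v'}|$ by the choice of $u'$. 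Without (i), the compensation in (ii) can fail: if \A\ opens with the vertex carrying the largest pendant component, \B\ steals the other special vertex and then a component larger than anything \A\ can grab back. A smaller remark on your case $C_1$: your counting argument for strictness is not sound as stated, since only the \emph{two} components \B\ attacks need to have size exactly $n/3$ (the third may be smaller), which is possible when $3\mid n$; the easy fix is to place \A's second pebble \emph{inside} a component \B\ has claimed, recovering at least that one vertex.

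For part (2) you correctly diagnose that no ``static'' one-round bound can work and that \B\ must convert \A's early commitments into later gains, but you do not actually produce a construction: ``many equal, far-apart heavy bunches'' with ``a mirroring/pairing strategy that claims two bunches for each one \A\ secures'' restates the goal ($\B$ gets $2/3$) without a tree or an analysis against all of \A's strategies. The mechanism the paper uses, and which your sketch is missing, is \emph{exponentially increasing spacing}: the tree has a degree-3 center $c$, a ``head'' broom of $kN$ leaves at a neighbor $h$ of $c$, and two ``legs,'' each a path carrying $k$ brooms of $N$ leaves at mutual distances $1,2,4,\dots,2^{k-1}$. The exponential gaps guarantee that a single \B\ pebble placed above \A's pebbles on a leg controls every broom below it except roughly one broom per \A\ pebble, so \B\ can defend a leg move-for-move (the paper's Observations~\ref{legs} and~\ref{dominate}). \B\ then takes $c$ (or $h$ if \A\ took $c$) and defends legs, ending with at least $2kN-tN$ of the $3kN$ broom vertices; taking $N$ and then $k$ large yields $\VR(T,t)\le \frac13+\frac{t}{3k}+o(1)<\frac13+\eps$. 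Note also that one fixed tree (with $k$ large in terms of $t/\eps$) works for every number of rounds; no scaling of the number of branches with $t$ is required, and indeed the center must have \emph{small} degree (three), not many branches, since \A\ would otherwise profit from grabbing a high-degree hub.
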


\begin{proof}
  Lower bound.
  If Lemma~\ref{crucial} produces a single vertex $c$, \A's strategy
  is obvious: take $c$. All components of $T-c$ have size $\le n/3$. With
  two moves, \B\ can take at most 2 components, and thus \A\ keeps at
  least $n/3$ vertices, even without placing her second pebble.

  If Lemma~\ref{crucial} produces two points $u,v$, then \A\ tries to
  put pebbles on them. If this succeeds, we are done: as above, after
  placing two pebbles, \B\ can own at most two components of
  $T-\{u,v\}$, and thus have at most $2n/3$ vertices in total.

  However, \B\ might occupy $u$ or $v$ in his first move. Therefore,
  \A\ has to use a more refined strategy.  Let $T_u$ and $T_v$ denote
  the components of $u$ and $v$ after removing the edges on the path
  between $u$ and $v$. By property $C_2'$, we know that $|T_u|,|T_v| >
  n/3$.  We call the neighbors of $u$ and $v$ that are not on the path
  from $u$ to $v$ the \emph{children} of $u$ and $v$.
  Each child $x$ corresponds to an (outer) component $T_x$ of $T-\{u,v\}$, and we pick
  the child for which this component is largest. Suppose w.l.o.g.\
  that this is a child $u'$ of $u$.
  Then \A\ begins by placing a pebble on $v$. If \B\ does not take $u$
  as a response, \A\ takes it, and we are done, as we have seen above.
  So let us assume that \B\ takes~$u$. Then \A\ takes $u'$ in her
  second move.

Case 1. $\B$ does not take a vertex in $T_v$ in his final move. Then \A\ still owns
$T_v$, and we are done.

Case 2. $\B$  takes a vertex in a component $T_{v'}$, for a child $v'$
of $v$. Then \A\ still owns the rest of $T_v$, excepting $T_{v'}$, plus
all of $T_{u'}$, giving in total at least
$$
|T_v|-|T_{v'}| +|T_{u'}|
\ge
|T_v|> n/3,
$$
by the choice of $u'$.
This concludes the proof of the lower bound.


  Upper bound.
We construct a tree so that  \B\ has a strategy to gain approximately $\frac{2}{3}$ of the vertices for any number of turns $t\geq2$. Observe the following tree and strategy. First we need to introduce a couple of definitions.

   A vertex together with $x$ neighbors of  degree $1$ forms a
   \emph{broom} of size~$x$.
 Take a path and attach  a broom at successive distances
 $1,2,4,8,\dots,2^{m-1}$  from each other.  We call such a path \emph{a
   leg} if it contains $k$ brooms of size $N$. Numbers $k$ and $N$
 will be specified later.  If $N$ is very large, the vertices of the
 path become negligible, and the mass of the graph is concentrated in
 the brooms.

\paragraph{Construction:} 
Take a centre point $c$ that will be of degree three.  We attach two
legs to $c$ and a vertex $h$ forming a broom of size $kN$, which we
call the head,
see Figure~\ref{fig:tree-two-rounds}. If $N$ is large,
 each component of $G-\{c\}$ has about $\frac{1}{3}$ of the vertices.

\begin{figure}[htb]
  \centering
  \includegraphics{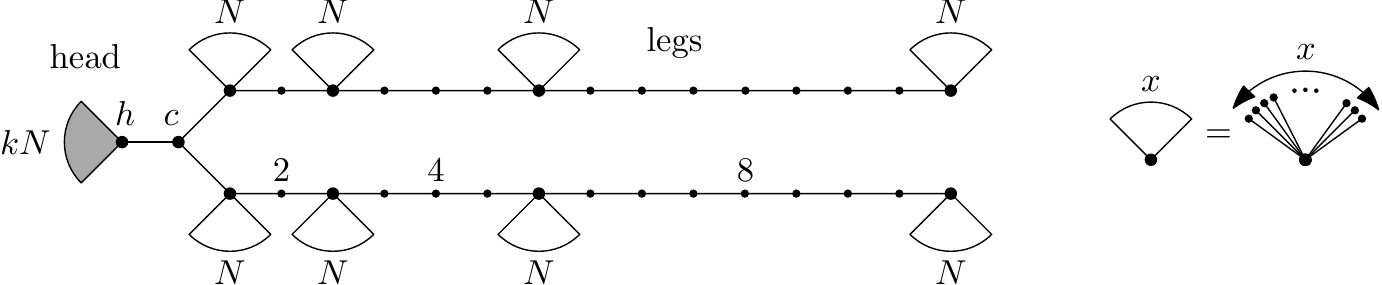}
  \caption{Player $\B$ can ensure to get $2/3-1/k$ in $t=2$ moves.
 A circular sector represents a
    large number $x$ of leaves
    incident to one vertex (a ``broom''). 
 In the example, each leg has $k=4$ brooms.
 }
  \label{fig:tree-two-rounds}
\end{figure}

 The longest path of a leg will be called \emph{the path} of a leg. We define  a natural ordering on the path of a leg.  Vertex $c$ will be on the top and all other \emph{over} and \emph{below} relations of the vertices on  the path of a leg we correlate according to that.

As a straightforward consequence of the exponentially increasing
distances of a leg, we obtain the following observation.

\begin{obs}\label{legs}
 Suppose player \B\ claims a vertex $v$ on the
  path of a leg and $w$ is the closest vertex below $v$ such that $w$
  or a leaf adjacent to $w$ is claimed by a player (either by \A\ or
  \B).  Then \B\ controls all the brooms lying below $v$ up to and
  including $w$, except at most one.
\end{obs}
As a consequence, we get.
\begin{obs}\label{dominate}
 Suppose player \B\ has claimed $c$ or the highest vertex of a leg $l$ and
 player \A\ has claimed $i$ vertices of this leg.
In addition, suppose
 for each vertex 
 $w$ on the path for which $w$
  or a leaf adjacent to $w$ is claimed by player \A,
some player has claimed the vertex $w'$ immediately below it (unless
$w$ is the lowest vertex, for which $w'$ does not exist).
Then \A\ owns at most $i$ brooms, plus possibly $i$ individual leaves
in brooms which are otherwise taken by \B.  
\end{obs}
If this condition is fulfilled, we say that \B\ \emph{dominates} the
leg.
When \B\ has claimed $c$ or the highest vertex of a leg $l$, then
he can ensure that he dominates $l$ if he can place as many pebbles
into
$l$ as $\A$, in addition to the pebble placed at $c$ or the highest
vertex, by following the strategy suggested by
Observation~\ref{dominate}.

\paragraph{Strategy:} 
\begin{itemize}
\item If \A\ takes the centre vertex $c$ in the first turn, then \B\
  takes $h$. In the second turn \A\ can either take a leaf from the
  broom at $h$ or a vertex from one of the legs. In either case there
  is a leg $l$ where \A\ did not put any pebble yet. In his second
  turn \B\ takes the closest vertex to $c$ on $l$. Therefore at this
  point of the game \B\ owns the whole leg $l$ completely. In his further moves,
  \B\ will \emph{defend} $l$, ensuring that he dominates $l$ according
  to the condition of Observation~\ref{dominate}:
If \A\ claims a vertex $v$ on the path of $l$, then \B\ claims the vertex below
$v$ if it is defined and available. If it is not available, then $v$
is either the lowest vertex or it is above an already claimed
vertex. In either case \B\ can claim any available vertex.
 If 
\A\
 claimed a leaf belonging to a broom on $l$,
 \B\ claims the neighbour of $v$ on the path of $l$ if it is
 available. Otherwise, \B\ can claim any available vertex. If \A\
 claims a vertex not belonging to $l$, then \B\ claims any available
 vertex.

\item If \A\ does not take the centre vertex $c$, then \B\ takes it.
From now on, \B\ will try to defend both legs, as in the
  strategy above.
The problem is that \B\ may be one move short in his defensive
strategy, if \A\ has moved to a leg 
in his first move.
If \A\ takes a vertex from the head in any of her turns (including her
first move), then \B\ can catch up with
\A\ and dominate both legs from then on.
If \A\ never takes a vertex from the head, then \B\ can successfully defend only
one leg, but he owns the whole head.

Now we describe the strategy more precisely.
There are two possibilities.
Suppose \A\ takes a vertex from
  the broom formed by $h$ in the first turn. Then \B\ claims
  $c$, and in 
 all his forthcoming turns, \B\ will defend both legs, see the
  strategy above.
 More precisely, when \A\ claims a vertex from the
  leg $l_1$, then \B\ defends $l_1$.
 When \A\ claims a vertex from the
  leg $l_2$, then \B\ defends $l_2$. 

Consider the other case, when \A\ claims a vertex from a
  leg in her first turn. Then \B\ claims the centre $c$, and
in all
  remaining turns, if \A\ claims a vertex from a leg, \B\ will
  defend that leg.
 If in a turn \A\ claims a vertex from the broom
  formed by $h$, then \B\ will claim the vertex which is right below
  the vertex taken by \A\ in her first turn if it is defined and
  available.
 If the taken vertex by \A\ in her first turn was a leaf
  in a broom, \B\ takes the broom if available. In all other cases,
  \B\ is free to chose any available vertex.

\end{itemize}

\paragraph{Analysis of the strategy:} 
In the first case \B's strategy was to gain $h$ and as much as possible from a leg. As a result of this strategy, by Observation~\ref{dominate}, \B\ ensures himself the whole leg except of those brooms in which \A\ claimed a vertex. In the end of the game, \B\ will control all vertices of the broom formed by $h$ except at most $t$ leaves and the leg $l$ without at most $t$ brooms.

When  \B's strategy was to defend both legs by Observation~\ref{dominate} he ensures himself both legs except those brooms in which \A\ claimed a vertex, which is at most $t$. 

In the last case \B's strategy was to defend a leg, while he controls the large broom of $h$, and if \A\ claimed a vertex from the broom formed by $h$, then \B\ defended both legs. Thus either \B\ obtained both legs except of those brooms in which \A\ claimed a vertex, or \B\ gained the broom formed by $h$ and a leg possibly without at most $t$ brooms.

\paragraph{Counting the gain:}

By our construction the tree contains $kN$ vertices in the brooms of each of the three subtrees connected to the centre vertex $c$. Hence, there are $3kN$ vertices in the brooms of the tree.
In each of the three cases \B\ gains at least $2kN-tN$ vertices
in brooms. There are more vertices in the tree outside the
brooms but we achieve  that the number of those is negligible by
increasing~$N$. Therefore, \B\ gets
$\frac{2}{3}-\frac{t}{3k}$, and for big $k$ this amount is
close to $\frac{2}{3}$. Hence, the statement of the theorem follows.
\end{proof}

\section{Graphs with bounded degree}\label{BoundedDegreeSection}
\label{max-degree}
In this section, we investigate when player \B\ is able to obtain some positive proportion of the vertices, i.e., for a fixed $\eps>0$ we are interested in knowing for which graphs $G$ we have $\VR(G,t)\leq 1-\eps$.  For every $\eps>0$ and $t$, there are certainly graphs for which $\VR(G,t)>1-\eps$. For example, we could take $G$ to be a star with more than $\frac{t}{\eps}$ leaves.  However if $G$ is not allowed to have vertices of high degree, then the situation changes.  

\begin{lem}\label{MaxDegreeOneRound}
In a connected graph $G$ with $n$ vertices and maximum degree $\Delta$, we have
$$\VR(G,1)\leq 1-\frac{1}{\Delta}+\frac{1}{n\Delta}.$$
\end{lem}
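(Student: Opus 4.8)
The plan is to show that, against any opening move $v_A$ of \A, player \B\ has a response securing at least $(n-1)/\Delta$ vertices. Since $\VR(G,1)$ records \A's share (plus half of ties), this immediately gives
$$\VR(G,1)\le \frac{n-(n-1)/\Delta}{n}=1-\frac1\Delta+\frac1{n\Delta}.$$
The key idea is that \B\ should always play a carefully chosen \emph{neighbor} of $v_A$: a neighbor sits one step closer to a whole ``cone'' of vertices, and some neighbor must govern a large cone simply by pigeonhole.

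To make this precise, I would first build a shortest-path (BFS) tree rooted at $v_A$. Because $G$ is connected, every vertex has finite distance to $v_A$, and every $x\neq v_A$ admits a shortest $v_A$–$x$ path whose first edge runs to some neighbor $w$ of $v_A$. Assign each such $x$ to one such neighbor $w$ (each neighbor lying in its own class). This partitions the $n-1$ vertices of $V(G)\setminus\{v_A\}$ into at most $\deg(v_A)\le\Delta$ nonempty classes, one per neighbor. By pigeonhole some neighbor $w$ receives a class of size at least $\lceil (n-1)/\Delta\rceil\ge (n-1)/\Delta$; \B\ then plays $v_B:=w$.

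It remains to verify that \B\ actually controls every vertex of this class. For each $x$ assigned to $w$, the vertex $w$ lies on a shortest $v_A$–$x$ path, so $\dist(v_A,x)=1+\dist(w,x)$, whence $\dist(w,x)=\dist(v_A,x)-1<\dist(v_A,x)$. Thus $x$ is \emph{strictly} closer to $v_B$ than to $v_A$ and is won outright by \B; in particular none of these vertices is a tie. Consequently \A\ controls at most $n-(n-1)/\Delta$ vertices even after counting half of any remaining tied vertices, which yields the stated bound.

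The argument is short, so the only points requiring care are exactly the two just used. The strict inequality $\dist(w,x)<\dist(v_A,x)$ is what guarantees these vertices count fully for \B, so that the tie-splitting convention in the definition of $\VR$ cannot rescue \A. And connectivity is precisely what makes every distance finite and the neighbor-assignment exhaustive, so that the $n-1$ non-root vertices really are covered by at most $\Delta$ classes. I expect the main (minor) obstacle to be bookkeeping: one must bound \B's \emph{guaranteed} strict gain against \A's \emph{best possible} share, rather than against one particular end configuration, so that the estimate holds for the optimal play defining $\VR(G,1)$.
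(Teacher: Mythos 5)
Your proof is correct and follows essentially the same route as the paper: the paper defines, for each neighbor $x_i$ of \A's vertex, the set $H(x_i)$ of vertices strictly closer to $x_i$, observes that these sets cover all of $V(G)\setminus\{v_A\}$ (via the same first-edge-of-a-shortest-path argument you make explicit with the BFS tree), and picks the largest one by averaging, giving \B\ at least $(n-1)/\Delta$ strictly-won vertices. Your partition into classes versus the paper's (possibly overlapping) cover is an immaterial difference; both yield the identical bound.
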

\begin{proof}
Let $v$ be the vertex chosen by player \A\  on her first move,
and let $x_1,\dots,x_k$ be the neighbors of $v$, with $k\le \Delta$. Let $H(x_i)$ be the set of vertices which are closer to $x_i$ than to $v$. Obviously every vertex of $G$ belongs to at least one $H(x_i)$.
\B\ picks the neighbor $x$ 
for which $|H(x)|$ is largest and will control
at least $|H(x)|\ge (n-1)/\Delta = n/\Delta -1/\Delta
$ vertices.
This implies $\VR(G,1)\leq 1-\frac{1}{\Delta}+\frac 1 {n\Delta}$.
\end{proof}

Combining Lemma~\ref{MaxDegreeOneRound} with
Theorem~\ref{GeneralBound} we obtain the following.
\begin{cor}\label{MaxDegreeManyRounds}
In a connected graph $G$ with $n$ vertices and maximum degree $\Delta$, we have
$$\VR(G,t)\leq 1-\frac{1}{2\Delta}+\frac{1}{2n\Delta}
.$$
\end{cor}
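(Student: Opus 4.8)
The plan is to simply chain the two results that precede this corollary, since the right-hand side of the desired inequality is exactly what one obtains by feeding the one-round degree bound into the general upper bound. The key structural observation is that the upper half of Theorem~\ref{GeneralBound}, namely $\VR(G,t)\leq \tfrac12(\VR(G,1)+1)$, expresses $\VR(G,t)$ as a \emph{monotonically increasing} function of $\VR(G,1)$ (the coefficient on $\VR(G,1)$ is the positive number $\tfrac12$). Hence any upper bound on the one-round ratio immediately propagates to an upper bound on the $t$-round ratio without reversing the inequality.

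Concretely, first I would note that the hypotheses line up: $G$ is connected with maximum degree $\Delta$, so Lemma~\ref{MaxDegreeOneRound} applies and yields $\VR(G,1)\leq 1-\tfrac1\Delta+\tfrac1{n\Delta}$; and $t\geq 1$, so the upper bound of Theorem~\ref{GeneralBound} applies. Then I would substitute the lemma's estimate into that upper bound and simplify, computing $\tfrac12\bigl(1-\tfrac1\Delta+\tfrac1{n\Delta}+1\bigr)=1-\tfrac1{2\Delta}+\tfrac1{2n\Delta}$, which is precisely the claimed bound.

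There is essentially no obstacle to overcome here; the only points worth checking are that both sets of hypotheses hold simultaneously and that the substitution is legitimate in the right direction, which it is by the monotonicity noted above. So the entire argument reduces to this single substitution followed by a routine arithmetic simplification, and no additional strategy or combinatorial input beyond the two cited results is needed.
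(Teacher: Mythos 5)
Your proof is correct and is exactly the paper's argument: the paper derives this corollary by combining Lemma~\ref{MaxDegreeOneRound} with the upper bound of Theorem~\ref{GeneralBound}, just as you do, and your arithmetic $\tfrac12\bigl(1-\tfrac1\Delta+\tfrac1{n\Delta}+1\bigr)=1-\tfrac1{2\Delta}+\tfrac1{2n\Delta}$ checks out.
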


Let $S_{k,N}$ is the graph formed from a star with $k$ leaves by replacing every leaf with a path of length $N$.  Since player \A\  can always choose the center of the star on her first move, it is easy to see that  $\VR(S_{k,N},1)\to 1-\frac{1}{k}$ as $N\to \infty$. This shows the bound in Lemma~\ref{MaxDegreeOneRound} cannot be substantially improved.  

For $t\geq 2$, we were not able to determine whether the bound in Corollary~\ref{MaxDegreeManyRounds} can be improved or not.  However we were able to find graphs which show that the bound in Corollary~\ref{MaxDegreeManyRounds} cannot be increased by more than $\frac{1}{\Delta}$, by proving the following.

\begin{thm}\label{MaxDegreeLowerBound}
For every $\Delta,t\geq 1$ and $\eps>0$,  there is a connected graph $G$ with maximum degree $\Delta$ satisfying 
$$\VR(G,t)\geq 1-\frac{1}{\Delta}-\eps.$$
\end{thm}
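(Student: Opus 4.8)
The plan is to build, for $\Delta\ge 3$, a bounded-degree ``spider'' on which \A\ can imitate the defensive strategy used for the tree in Theorem~\ref{TwoRoundTrees}. Take a centre $c$ of degree $\Delta$ and hang $\Delta$ \emph{legs} off it. Each leg is a path (its \emph{spine}) descending from $c$, to which we attach, at mutual distances $1,2,4,\dots,2^{k-1}$, a long pendant path of $M$ vertices; these pendant paths play the role of the brooms in Figure~\ref{fig:tree-two-rounds}, but being paths they keep the maximum degree at $\max(\Delta,3)=\Delta$. As usual we take $M$, and then $k$, enormous, so that the spine vertices are negligible and essentially all the mass sits in the pendant paths. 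There are $k$ of them on each of the $\Delta$ legs, so a single leg carries a $\tfrac1\Delta$-fraction of the vertices. \A's strategy is to grab $c$ on her first move and then play purely defensively.

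The defence is the tail analogue of Observation~\ref{dominate}: whenever \B\ claims a vertex of a leg, \A\ answers by claiming the spine vertex immediately below it (playing arbitrarily if that vertex is already taken or does not exist, exactly as in the wasted-move arguments of Theorems~\ref{GeneralBound} and~\ref{TwoRoundTrees}). I would first prove the corresponding confinement statement, the tail version of Observation~\ref{legs}: if \A\ owns $c$ and answers in this way, then each move of \B\ inside a leg nets him at most one pendant path. The exponential spacing is what makes this hold with a single answering pebble: a cap placed just below \B's pebble leaves every lower pendant path closer to the cap, while the geometric growth of the gaps guarantees that the pendant path just above \B\ is still (barely) closer to $c$ than to \B. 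The crucial counting point is that, since \A\ spends her first move on $c$, over $t$ rounds she has exactly $t-1$ answers available for \B's first $t-1$ moves, so every move of \B\ \emph{except his last} is confined in this way to one pendant path.

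It remains to bound the single unanswered move. A lone pebble controls one connected region, which lies inside one leg and cannot reach past \A's pebble at $c$ into any other leg; hence \B's last move is worth at most one full leg, i.e.\ a $\tfrac1\Delta$-fraction. Summing, \B\ finishes with at most $t-1$ pendant paths plus one leg. Choosing first $M$ and then $k$ large enough that both the spine vertices and the $t-1$ extra pendant paths are negligible next to a full leg, \B\ controls at most a $(\tfrac1\Delta+\eps)$-fraction, so $\VR(G,t)\ge 1-\tfrac1\Delta-\eps$, as required. The cases $\Delta=1$ (trivial, as the bound is nonpositive) and $\Delta=2$ (a long path, on which \A\ takes the centre and \emph{mirrors} \B\ across it to keep half the vertices, giving $\VR\to\tfrac12=1-\tfrac1\Delta$) are handled separately, since the pendant-path gadget needs degree $3$.

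I expect the confinement lemma to be the main obstacle. One has to check all the ways \B\ can play inside a leg -- at a junction, at a plain spine vertex, or deep inside a pendant path -- and verify that a single answering cap, together with the exponentially increasing gaps, always isolates \B\ within one pendant path. This is precisely where a naive subdivided star $S_{k,N}$ fails: on a plain path \B\ can sit at the two-thirds point of a leg and keep a constant fraction of it against \emph{any} single answer, so without the concentrated, exponentially spaced pendant paths the per-move gain of \B\ would not tend to $0$. The remaining bookkeeping -- that \A\ never runs out of legal answers, that wasted answers only help \A, and that the one unanswered move yields at most a single leg -- is routine and parallels the arguments already used in Theorem~\ref{TwoRoundTrees}.
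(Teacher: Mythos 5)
Your proof is correct, but it takes a genuinely different route from the paper's. Both arguments share the same outer skeleton: \A\ claims a hub of degree $\Delta$, answers each of \B's moves locally inside the branch \B\ just played in, and concedes that \B's final, unanswerable move is worth one branch, i.e.\ a $\frac{1}{\Delta}$-fraction. The difference lies in what the branches are and why \A's local answers confine \B\ to an $\eps$-fraction of each branch. The paper makes each branch a copy of the degree-$3$ graph $G_{t,\eps}$ of Lemma~\ref{OneRoundBoundedDegree} (the grid-connected-cycles construction), and \A\ simply steals the second player's strategy inside whichever copy \B\ just moved in; the ``after each round'' strengthening of that lemma is exactly what this requires, since \B\ distributes his moves among the copies adaptively. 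You instead make each branch a leg with exponentially spaced pendant paths and prove confinement directly, transplanting the broom machinery of Theorem~\ref{TwoRoundTrees} (Observations~\ref{legs} and~\ref{dominate}) with attacker and defender exchanged -- which is legitimate, because those observations are purely metric and say nothing about who moves first. What each buys: your route is self-contained and elementary, bypassing the high-dimensional simplex argument, Theorem~\ref{Zero}, and the grid-connected cycles entirely; the paper's route reduces this theorem to a few lines at the cost of the heavy Lemma~\ref{OneRoundBoundedDegree}, which however has independent interest as the bounded-degree analogue of Theorem~\ref{Zero}. Your observation about why the subdivided star $S_{k,N}$ fails (each answered move of \B\ on a bare path is worth a constant fraction of a leg) correctly identifies the need for the concentrated, exponentially spaced gadgets. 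Two details in your write-up need repair, neither fatal. First, the constants must be fixed in the order opposite to the one you state: $k$ first, then $M$, since a leg's spine has $2^k-1$ vertices and is negligible only when $M\gg 2^k/k$. Second, the confinement lemma must also cover \B\ building a chain of adjacent pebbles, which forces \A's prescribed answers to be wasted moves; the accounting survives because a window of $m$ consecutive spine vertices contains only $O(\log m)$ junctions, so $m$ chained pebbles still net at most $m$ pendant paths, but this wasted-move bookkeeping (the analogue of what appears in the proofs of Theorem~\ref{GeneralBound} and Theorem~\ref{TwoRoundTrees}) must appear explicitly in your proof.
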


In order to prove Theorem~\ref{MaxDegreeLowerBound}, we first need to show that for every  $t$, there are graphs with maximum degree $3$ on which \B\ can claim almost all the vertices after $t$ rounds.  We prove the following.

\begin{lem}\label{OneRoundBoundedDegree}
For every  $t\geq 1$ and $\eps>0$, there is a graph $G_{t,\eps}$  with maximum degree $3$ and  the following property:  Player \B\ has a strategy for the Voronoi game on $G_{t,\eps}$ such that after each round $1, \dots, t$, he will control a fraction $1-\eps$ of the vertices after
  \emph{each} of the rounds $1, \dots, t$.
\end{lem}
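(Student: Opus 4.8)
The plan is to reuse the winning construction and strategy from the proof of Theorem~\ref{Zero}, which already lets \B\ control a $(1-\eps)$-fraction after each round, and to convert it into a graph of maximum degree $3$ without destroying \B's advantage. Recall that the graph there is a simplex grid $S\subset\Z^d$ with a cluster of $N$ leaves hung at each of the $d$ corners; its high degree comes from two sources, namely the grid vertices (degree up to $d(d-1)$) and the corners with their pendant leaves. I would first replace the $N$ leaves at each corner by a single long path of $M$ vertices, so that the mass of the graph is concentrated in $d$ long \emph{cluster paths} rather than in high-degree brooms. I would then subdivide every remaining edge into a path of some large length $L$, and finally replace each vertex of degree $\delta>3$ by a degree-$3$ routing gadget --- for example a balanced binary tree with $\delta$ leaves, the $\delta$ formerly incident edges being attached to these leaves. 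The resulting graph $G_{t,\eps}$ has maximum degree $3$, and I will fix the parameters $d\gg t$, then $L$, then $M$ in this order at the very end.

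The point of the subdivision is that the routing gadgets distort graph distances only mildly. Each gadget that a shortest path passes through adds an additive detour of at most $O(\log\delta)=O(\log d)$, and any shortest path between two clusters crosses $O(d^2 t)$ original vertices, so the total additive distortion of any such distance is $O(d^2 t\log d)$, a quantity independent of $L$ and $M$. On the other hand, after subdivision the margin by which \B's projected pebble beats \A's pebble in the inequalities used in Theorem~\ref{Zero} (``$\pi_i(x)$ is closer to every corner except the $i$-th'') is $\Theta(L)$ times a positive combinatorial gap. Choosing $L$ much larger than $d^2 t\log d$ therefore preserves the sign of every distance comparison between a pebble and a cluster, with room to spare.

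With the metric essentially preserved, I would lift \B's strategy verbatim. Whenever \A\ claims a vertex, I map it to the nearest original grid vertex $x$ (the error is $O(L+\log d)$, negligible against the surviving margins) and let \B\ answer with the gadget vertex closest to the projection $\pi_i(x)$ prescribed in Theorem~\ref{Zero}, using the same rule for selecting a free point of the sequence $\pi_i(x),\pi_i(\pi_i(x)),\dots$. By the margin argument, after each round \B\ is strictly closer than \A\ to every cluster path except at most one new path per round, hence \B\ owns all but at most $t$ of the $d$ cluster paths up to an $o(1)$ boundary effect. Finally, taking $M$ enormous makes the structural vertices (grid, gadgets, subdivisions) a negligible fraction of $|V(G_{t,\eps})|$ and makes the few misassigned vertices inside each path negligible as well, so \B\ controls at least a fraction $\frac{d-t}{d}-o(1)$ after each round; choosing $d\ge t/\eps$ gives the claimed $1-\eps$.

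The main obstacle is the bookkeeping in the second paragraph: one must verify that the cumulative gadget distortion really is dominated by the projection margin \emph{uniformly} over all of \A's possible placements, including placements inside a gadget or in the interior of a subdivided edge or of a cluster path, and that the handful of vertices whose Voronoi assignment does flip all lie in an $o(1)$-fraction near the region boundaries. Everything else is a direct transcription of the strategy of Theorem~\ref{Zero}, whose ``after each round'' guarantee is exactly what carries over to yield the per-round statement required here.
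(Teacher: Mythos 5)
Your proposal follows the same master plan as the paper---take the construction of Theorem~\ref{Zero}, reduce the maximum degree with local gadgets, stretch the edges so that the relevant distance comparisons survive, hang a long path at each corner, and let \B\ lift the projection strategy through the degree-reduction map---but the implementation is genuinely different, and the difference is instructive. The paper does not gadget-ize the simplex graph at all: it first switches the underlying grid to a solid cube of side $d^2t$ with the lowest corner cut off, so that every vertex has the uniform neighbour structure $\pm e_i$ and degree at most $2d$, and then applies a cube-connected-cycles construction (each grid point becomes a ring of $2d$ nodes, each grid edge a connection path of $6d-1$ edges). This uniformity gives an essentially exact metric embedding, $6d\lVert x-y\rVert_1-1\le \dist \le 6d\lVert x-y\rVert_1+5d$, so the original $L_1$ distances can be recovered by rounding and every strict inequality carries over with no margin bookkeeping whatsoever. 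Your binary-tree gadgets plus subdivision work directly on the simplex graph, but the non-uniform detours force the perturbation analysis you sketch, and that analysis closes only after a check you have not done and have in fact misstated: when \A\ plays in the interior of a subdivided edge, rounding her pebble to the nearest endpoint incurs an error of up to $L/2$, which is \emph{not} ``negligible against the surviving margins''---the margin produced by a single projection step is only $L$ minus gadget terms, so the two are of the same order. The argument survives because $L/2<L$, a factor-two comparison, and it genuinely requires rounding to the \emph{nearest} endpoint (rounding to an arbitrary endpoint would erase the margin entirely). The paper avoids this delicacy with a cleaner device that you could borrow: a pebble on a connection path is dominated by pretending \A\ owns \emph{both} endpoints, which costs at most one extra corner per round and is absorbed simply by taking $d=\lceil 2t/\eps\rceil$ rather than $\lceil t/\eps\rceil$. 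With either repair (the explicit factor-two bound, or the both-endpoints trick) your route yields the lemma; what the paper's route buys is exactness and brevity, at the price of having to redesign the grid so that the ring construction applies.
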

\begin{proof}
The proof is an extension of Theorem~\ref{Zero}.  We set $d=\lceil\frac{2t}{\eps}\rceil$.
Instead of a hyperplane in $\Z^d$, we will take a full cube of side
length $L=d^2t$ from which the lowest corner has been cut off:
 the graph $H$ has vertex set 
$$\{\,(x_1,x_2,\ldots,x_d)\in \mathbb{Z}^d \mid 0\le x_i
\le L,\ 
x_1+x_2+\dots+x_d\ge L
\,\}.$$
Two vertices are connected in $H$ whenever their $L_1$ distance is
$1$, i.e. they differ in one coordinate and the difference is $1$. Then the distance between any two vertices equals their $L_1$ distance.
As before,
the corners $C$ are the points $(L,0,\dots,0)$,
 $(0,L,0,\dots,0)$, \dots, $(0,\dots,0,L)$.
The distance
from a vertex $(x_1,\ldots,x_d)$
 to the $j$-th corner can be calculated as
$$L+\sum_{i=1}^d x_i -2 x_j.$$

The strategy of Theorem~\ref{Zero}
 must be adapted to
  account for the fact that $H$ has additional vertices:
Suppose $\A$ takes vertex $x=(x_1,\ldots,x_d)$, and suppose w.l.o.g.\
that $x_1\ge L/d = dt$
is the
largest coordinate.
 Then 
 $\B$ calculates the response point
 $\pi_1(x)=(x'_1,x_2',\ldots,x_d')$,
where $x_i' = \min\{x_i+1,L\}$ for $i=2,\ldots,d$, and
$x_1' = x_1-(d-1) \ge (t-1)d$.
These formulas ensure that $x'\in H$, and 
one can easily show that
every corner except the first is closer to $x'$ than to $x$.

By an argument analogous to the proof of Theorem~\ref{Zero}, one can
find a
 strategy $\mathcal{S}$ for \B\ in the 
 Voronoi game on $H$
which ensures that after each round $1, \dots, t$, there are at least
$d-t$ corners $c$ satisfying 
\begin{equation}
\nonumber 
\dist(c,B)<\dist(c,A)  ,
\end{equation}
 where
$A$ and $B$ are the sets of vertices chosen by \A\ and \B\
respectively. 


To cut down the maximum degree, we use a variation of the
\emph{cube-connected cycles} of Preparata and Vuillemin~\cite{PV}.
We construct ``grid-connected cycles''.
 \begin{figure}[htb]
   \centering
   \includegraphics{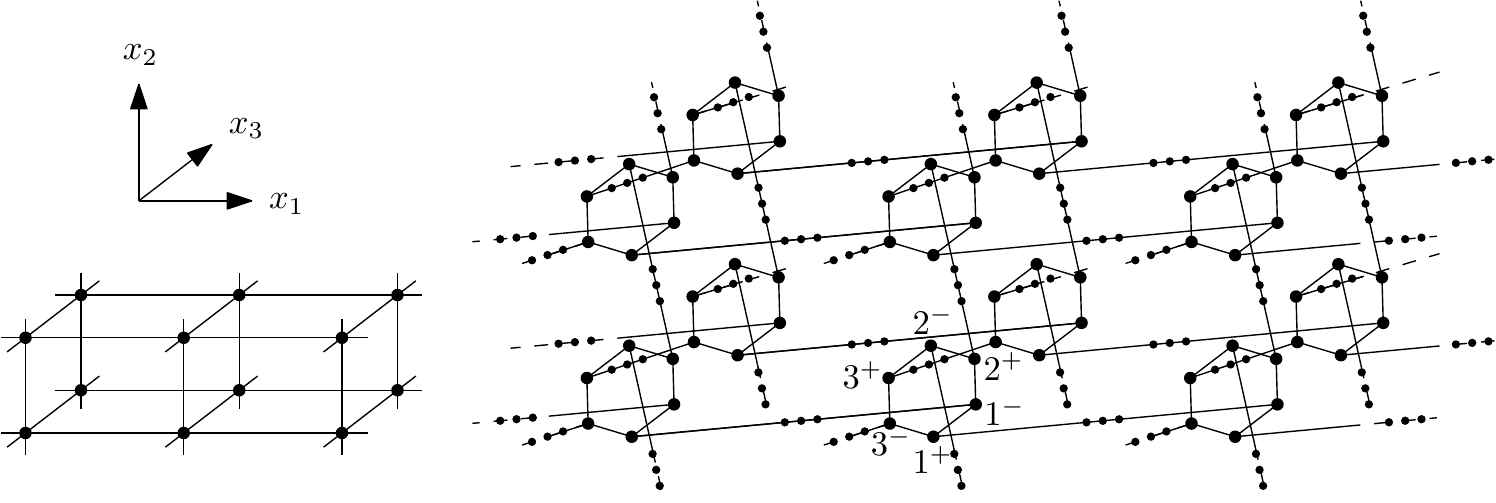}
   \caption{Representing the a $3\times2\times2$ section of the grid
     $\Z^3$ (shown on the left) by grid-connected cycles. The
     connection paths should have $6d-2=16$ intermediate vertices, but only 3
   are shown.}
   \label{fig:cube-connected}
 \end{figure}
Each point $x\in\Z^d$ is replaced by a circular ring of $2d$ nodes
that are labeled $1^+,1^-,2^+,2^-,\dots,d^+,d^-$ in cyclic order.
We denote them by $x(1^+)$, $x(1^-)$ etc.
For $i=1,\ldots,d$,
 node $x(i^+)$ is connected to
 node $x' (i^-)$ by a \emph{connection path} of $6d-1$ edges, where
 $x'=x+e_i$
and
$e_i$ is the $i$-th unit vector.
Figure~\ref {fig:cube-connected} shows a three-dimensional example.
The resulting graph has maximum degree~3.  Nodes on the
boundary of the cube have unused connections. 
For each corner, we pick one of these degree-2 nodes
and attach a very long path of length $N$ to it.
As usual, we make $N$ so big that the 
 original
graph becomes only a negligible fraction of the whole graph.
This gives us the graph $G = G_{t,\eps}$.

 The following proposition implies that playing the Voronoi game on
 $G_{t,\eps}$ is approximately the same as playing it on $H$.
The distances are preserved up to a multiplicative factor with an additive error.
 \begin{prop}
  Let $ x(p^\pm)$ and $y(q^\pm)$ be two vertices of $G$ corresponding
  to grid points $x,y\in \Z^d$. Then their distance
$\mathrm{dist}(x(p^\pm), y(q^\pm))$ in the graph is bounded as follows:
  \begin{equation*}
6d \cdot\lVert x-y\rVert_1 -1 \le
\mathrm{dist}(x(p^\pm), y(q^\pm)) \le 6d\cdot
\lVert x-y\rVert_1 +5d
  \end{equation*}
 \end{prop}
 \begin{proof}
    Lower bound.
 The connection paths that connect
different rings correspond to neighbouring points in $\Z^d$. Hence the
path between
 $ x(p^\pm)$ and $y(q^\pm)$ 
needs at least
$\lVert x-y\rVert_1$ of these connection paths. But since these paths are not directly adjacent, a path in $G$ has has to contain at least one ring edge between any
two connection paths.

Upper bound.
Consider two nodes
 $ x(p^\pm)$ and $y(q^\pm)$ that we want to connect by a path.
Let $u=(u_1,\dots,u_d)$ be the elementwise maximum of $x$ and $y$:
$u_i=\max\{x_i,y_i\}$.
Then we have
$\lVert x-y\rVert_1 =
\lVert x-u\rVert_1
+\lVert y-u\rVert_1$
 To get from 
 $ x(p^\pm)$ to $y(q^\pm)$, we go via the ring $u$.
We connect
 $ x(p^\pm)$ to the node $u((p-1)^-)$
by sequentially increasing each coordinate value
 $i=p,p+1,\ldots, p-1$
  from $x_i$ to $u_i$.
This procedure works because the graph represents a
subcube of $\Z^d$, from which some ``lower'' part has been removed. It
is always possibly to increase a coordinate, up the maximum~$L$.

We make possibly one initial step to $x(p^+)$.
The coordinate move in direction $i$
goes from 
some node $z(i^+)$ to some node $z'(i^-)$
in $6d|x_i-u_i|-1$ steps, 
strictly
alternating between connection paths and ring edges.
One more step brings us to
$z'((i+1)^+)$ to get ready for the next coordinate direction,
for a total of
 $6d|x_i-u_i|$ steps.
This bound does not work for $x_i=u_i$: there we need 2 steps from $z(i^+)$ to
$z((i+1)^+)$.
In total, we can bound the number of steps to at most
$\sum_{i=1} ^d (6d|x_i-u_i|+ 2)
= 6d\lVert x-u\rVert_1 +2d$.
Similarly, $y(q^\pm)$ is connected to some vertex on the ring $u$ in
at most
 $6d\lVert y-u\rVert_1 +2d$, steps, and we need at most $d$ additional
 steps on the ring~$u$.
 \end{proof}



We continue the proof of Lemma~\ref{OneRoundBoundedDegree}.
Let $V'\subset V(G)$ denote the nodes on rings, and
 let $f\colon V'\to H\subset \Z^d$ denote the function which maps
 every node $x(i^\pm)$ to its grid point $x$. 
As a consequence of the previous proposition, for two vertices
 $u,v\in H$, we can recover the $L_1$ distance of their corresponding
 grid points from their distance in the graph:
  \begin{equation}
\nonumber
\lVert f(u)-f(v)\rVert_1
 = \left\lfloor \frac{\mathrm{dist}(u,v) + 1}{6d}
\right\rfloor
  \end{equation}
This means that strict equalities between distances in $H$ carry over
to corresponding vertices of $V'$.

Player \B's strategy on $G_{t,\eps}$ is as follows:
If \A\ moves to a node $u$ on one of the rings, \B\ interprets this as a move to $f(u)$ in
$H$,
calculates
 his response
 $x$ according to the
strategy
$\mathcal{S}$ on $H$, 
and chooses
an arbitrary node $x(q^\pm)$ on the corresponding ring.
If \A\ selects several nodes on the same ring, they are interpreted
as wasted moves in $H$.
If \A\ plays on one of the long paths, this is interpreted as a move
to the corresponding corner vertex.
Finally, \A\ might move to a node $w$ on a connection path between
vertices $u$ and $u'$ on two rings. Then
 $f(u)$ and $f(u')$ differ in exactly one coordinate $x_j$. Let us
 assume that $f(u)$ has the smaller $x_j$-coordinate. Then $\B$ interprets this
 as a move to $f(u)$ and responds as above. To analyze the error
incurred 
 by this interpretation, let us imagine that \A\ had covered \emph{both} $u$ and
 $u'$. This would certainly be more advantageous for \A\ than
 covering~$w$ alone. However there is
 only one corner which is closer to $f(u')$ than to $f(u)$: the
 $j$-th corner. All other corners are closer to $f(u)$. Thus, by
 allowing
\A\ to cover the vertex $u'$ in addition to  $u$, she can 
 win at most one 
additional corner. It follows that after the $k$-th round ($1\le k\le
t$), \A\ owns at most $2k$ nodes $c\in
G_{t,\eps}$ to which a path of length $N$ is attached.
This implies the lemma.
%
\end{proof}

We can now prove Theorem~\ref{MaxDegreeLowerBound}.
\begin{proof}[Proof of Theorem~\ref{MaxDegreeLowerBound}.]
For given $\eps$ and $t$, we construct the graph $G$
 from $\Delta$ disjoint copies of $G_{t,\eps}$ called $G_1, \dots, G_\Delta$ and an extra vertex $v$ by adding exactly one edge between $v$ and $G_i$ for each $i$.

On her first move \A\  claims the vertex $v$. Subsequently \A\ always claims a vertex from the same $G_i$ as \B\ in the previous move. \A\
treats $G_1,\dots, G_\Delta$ as separate games, and plays the strategy of
the second player given by Lemma~\ref{OneRoundBoundedDegree}. Hence she controls at least $(1-\epsilon)|V(G_i)|$ vertices of $G_i$ after her move. However, she cannot answer the very last move of \B\.e

This ensures that at the end she controls at least $(1-\eps)|V(G_i)|$ of the vertices of each $G_i$ except for one.  Since for $i\neq j$ there are no edges between $G_i$ and $G_j$, Player \B\ can capture at most $|G_i|=\frac{1}{\Delta}(|V(G)|-1)$ vertices on his last move.  Therefore \A\  controls at least at least $\left(1-\eps-\frac{1}{\Delta}\right)|V(G)|$ vertices at the end of the game, proving the result.
\end{proof}

\subsection*{Remarks and acknowledgment}
Several questions are left open.
Are there trees $T$ for which $\VR(T,t)$ is close to $\frac 14$ or can the
first player always get at least $\frac 13$ of the vertices, for $t\ge
3$?
How much can she get if they play on a planar graph? What about biased
versions of the game, where the players play different amounts of pebbles?

This work started at the Fourth Eml\'ekt\'abla Workshop in Tihany in
August 2012,
whose topic was Positional Games. We are thankful to the organizer and
 Milo\v s Stojakovi\'c for posing the problem.
We are grateful to Younjin Kim and Tam\'as Hubai for several observations.

\bibliography{Voronoi}
\bibliographystyle{abbrv}

\end{document}